\documentclass[12pt,twoside,reqno]{amsart}
\usepackage{amssymb, latexsym}

\parskip .5ex

\newcommand{\beqa}{\begin{eqnarray*}}
\newcommand{\eeqa}{\end{eqnarray*}}
\newcommand{\beqn}{\begin{eqnarray}}
\newcommand{\eeqn}{\end{eqnarray}}

\newcommand{\iy}{\infty}

\newcommand{\B}{\mathbb B}
\newcommand{\C}{\mathbb C}

\newcommand{\R}{\mathbb R}

\newcommand{\N}{\mathbb N}

\newcommand{\K}{\mathbb K}

\newcommand{\mcE}{\mathcal E}

\newcommand{\mcL}{\mathcal L}

\newcommand{\mfB}{\mathfrak B}

\newcommand{\mfM}{\mathfrak M}

\newcommand{\la}{\mu}

\newcommand{\m}{\mu}

\newcounter{cnt1}
\newcounter{cnt2}
\newcounter{cnt3}
\newcommand{\blr}{\begin{list}{$($\roman{cnt1}$)$}
 {\usecounter{cnt1} \setlength{\topsep}{0pt}
 \setlength{\itemsep}{0pt}}}
\newcommand{\bla}{\begin{list}{$($\alph{cnt2}$)$}
 {\usecounter{cnt2} \setlength{\topsep}{0pt}
 \setlength{\itemsep}{0pt}}}
\newcommand{\bln}{\begin{list}{$($\arabic{cnt3}$)$}
 {\usecounter{cnt3} \setlength{\topsep}{0pt}
 \setlength{\itemsep}{0pt}}}
\newcommand{\el}{\end{list}}
\newtheorem{thm}{Theorem}[section]
\newtheorem{lem}[thm]{Lemma}
\newtheorem{cor}[thm]{Corollary}

\newtheorem{Def}[thm]{Definition}
\newtheorem{prop}[thm]{Proposition}
\newtheorem{rem}[thm]{Remark}
\newcommand{\Rem}{\begin{rem} \rm}
\newcommand{\bdfn}{\begin{Def} \rm}
\newcommand{\edfn}{\end{Def}}

\newcommand{\ba}{\begin{array}}
\newcommand{\ea}{\end{array}}

\sloppy

\begin{document}
\begin{center}\large{{\bf{Kuelbs-Steadman spaces on Separable Banach spaces}}} 
%(Dedicated to $45^{th}$ Birthday of my advisor Prof. Bipan Hazarika) 
%\vspace{0.5cm}
 
  Hemanta Kalita$^{1}$ , Bipan Hazarika$^{2}$, Timothy Myers$^3$\\
%\vspace{0.5cm}
$^{1}$Department of Mathematics, Gauhati University, Guwahati 781014, assam, India\\
$^{2}$Department of Mathematics, Gauhati University, Guwahati 781014, assam, India\\
 $^{3}$Department of Mathematics, Howard University, Washington DC
20059

%$^{3}$Department of Mathematics, Rajiv Gandhi University, Rono Hills, Doimukh 791112, arunachal Pradesh, India\\
Email:  hemanta30kalita@gmail.com;
 bh\_rgu@yahoo.co.in; bh\_gu@gauhati.ac.in ; timyers@howard.edu

\end{center}
\title{}
\author{}
%\thanks{$^{\ast}$The corresponding author}
\thanks{} 

\begin{abstract} The purpose of this paper is to construct a new class of separable Banach spaces  $\K^p[\mathbb{B}], \; 1\leq p \leq \infty$.  Each of these spaces contain the $ \mcL^p[\mathbb{B}] $ spaces, as well as the space $\mfM[\R^\iy]$, of finitely additive measures as dense continuous compact embeddings. These spaces are of interest because they  also contain the Henstock-Kurzweil integrable functions on $\mathbb{B}$. 
Finally, we offer a interesting approach to the Fourier transform on $\K^p[\mathbb{B}].$  
\\
\noindent{\footnotesize {\bf{Keywords and phrases:}}} Henstock-Kurzweil integrable function;  Uniformly convex; Compact dense embedding;  Kuelbs-Seadman space.\\
{\footnotesize {\bf{AMS subject classification \textrm{(2020)}:}}} 26A39, 46B03, 46B20, 46B25.
\end{abstract}
\maketitle

%\maketitle

\pagestyle{myheadings}
\markboth{\rightline {\scriptsize   Kalita, Hazarika,Myers}}%, Myres}}
        {\leftline{\scriptsize  }}

\maketitle
\section{Introduction and Preliminaries}
     
       %Henstock integral was first developed by R. Henstrock and J. Kurzweil independently during 1957-1958 from Riemann integral with the concept of tagged partitions and gauge functions.\\
       T.L. Gill and T. Myers \cite{GM} introduced  a new theory of Lebesgue measure on $\R^\infty;$ the construction of which is virtually the same as the development of Lebesgue measure on $\R^n.$ This theory can be useful in formulating a new class of spaces which will provide a Banach Spaces which will provide a Banach Space structure for Henstock-Kurzweil (HK) integrable functions. This later integral is interest because it generalizes the Lebesgue, Bochner and Pettis integrals see for instance \cite{RA,RH,HT,KZ,SY,BST}. As a major drawback of HK-integrable function space is not naturally Banach space (see \cite{AL,RA,HS,HT,H,KZ,KW,CS} references therein).  In \cite{TY}, Yeong mentioned if canonical approach can developed  then the above drawback can be solved. Gill and Zachary \cite{TG},  introduced a new class of Banach spaces $K{S^p}[\Omega],~ \forall~ 1\leq p \leq \infty $ (Kuelbs-Steadman spaces) and $\Omega \subset \mathbb{R}^n $ which are canonical spaces (also see \cite{KB}). These spaces are separable that contains the corresponding $ \mcL^p $ spaces as dense, continuous, compact embedding. There main aim was to find these spaces contains the Denjoy integrable functions as well as additive measures. They found that these spaces are perfect for highly oscillatory functions that occur in quantum theory and non linear analysis.\\
       Throughout our paper, we suppose the notation $\mathbb{B}_j^\infty$ and assume that $J=[-\frac{1}{2}, \frac{1}{2}]$ is understood. We denote $\mathcal{L}^1,~\mathcal{L}^p $ are classical Lebesgue spaces.   %We use $\mu_{\mathbb{B}}$ for our measure.
        Our study focused on the main class of Banach spaces $\K^p[\mathbb{B}_j^\infty],~1\leq p \leq \infty.$  These spaces contains  
        the HK-integrable functions and contains the $\mcL^p[\mathbb{B}_j^\infty] $ spaces, $1\leq p \leq \infty $ as continuous dense and compact embedding.  
         \begin{Def}\cite{CS}
   A function $ f:[a,b] \to \mathbb{R} $ is Henstock integrable if there exists a function $ F:[a,b] \to \mathbb{R} $ and for every $ \epsilon > 0 $ there is a function $ \delta(t) > 0 $ such that for any $ \delta-$fine partition $ D=\{[u,v], t \} $ of $I_0=[a,b], $ we have $$|| \sum[f(t)(v-u)-F(u,v)]|| < \epsilon,$$ where the sum $ \sum $ is  run over $ D= \{ ([u,v], t) \}$ and $F(u,v)= F(v)-F(u).$     We write $ H\int_{I_0} f=F(I_0).$
   \end{Def}
 \begin{Def}\cite{GM} 
   Let $ \mathbb{A}_n = \mathbb{A} \times J_n$ and $ \mathbb{B}_n = \mathbb{B} \times J_n $ ($n^{th}$ order box sets in $ \mathbb{R}^\infty$). We define 
   \begin{enumerate}
   \item $ \mathbb{A}_n \cup \mathbb{B}_n =(\mathbb{A} \cup \mathbb{B}) \times J_n;$
   \item $ \mathbb{A}_n \cap \mathbb{B}_n = (\mathbb{A} \cap \mathbb{B} ) \times J_n; $
   \item $ \mathbb{B}_{n}^{c} = \mathbb{B}^c \times J_n.$
   \end{enumerate}
   \end{Def}

 \begin{Def}
  \cite{TG} We define $\mathbb{R}_{J}^{n} =\mathbb{R}^n \times J_n.$ If $ T $ is a linear transformation on $ \mathbb{R}^n $ and $ \mathbb{A}_n = \mathbb{A} \times J_n ,$ then $ T_I $ on $ \R_J^n $ is defined by $ T_J[\mathbb{A}_n] = T[\mathbb{A}] $. We define $ B[\R_J^n]$ to be the Borel $\sigma-$algebra for $ \R_J^n,$ where the topology on $ \R_J^n $ is defined via the class of open sets $ D_n = \{ U \times J_n : U  \mbox{~is~ open~ in~} \mathbb{R}_J^n\}.$ For any $ a \in B[\mathbb{R}^n], $ we define $\mu_{\mathbb{B}}(\mathbb{A}_n) $ on $ \R_J^n $ by product measure $ \mu_{\mathbb{B}}(\mathbb{A}_n) = \mu\mathbb{A}_n(\mathbb{A}) \times \Pi_{i= n +1 }^{\infty} \mu_J(J) = \mu\mathbb{A}_n(\mathbb{A}).$
  \end{Def}
  Clearly 
 $ \mu_{\mathbb{B}}(.) $ is a measure on $ B[\R_J^n] $ is equivalent to $n$-dimensional Lebesgue measure on $ \mathbb{R}_J^n.$
  The measure $ \mu_{\mathbb{B}}(.) $ is both translationally and rotationally invariant on $ (\R_J^n, B[\R_J^n]) $ for each $ n \in \mathbb{N}.$
   Recalling the  theory on $ \R_J^n $ that completely paralleis that on $ \mathbb{R}^n. $     Since $ \R_J^n \subset \mathbb{R}_{J}^{n+1},$ we have an increasing sequence, so we define $ \widehat{\mathbb{R}}_{J}^{\infty} = \lim\limits_{ n \to \infty} \R_J^n = \bigcup\limits_{k=1}^{\infty} \mathbb{R}_{J}^{k}.$ Let $X_1= \widehat{\R}_J^\infty $ and let $\tau_1$ be the topology induced by the class of open sets $D \subset X_1$ such that $D= \bigcup\limits_{n=1}^{\infty}D_n = \bigcup\limits_{n=1}^{\infty}\{U \times J_n : U$ is open in $\R^n\}$. Let $X_2= \R^\infty \setminus\widehat{\R}_J^\infty$ and let $\tau_2$ be discrete topology on $X_2$ induced by the discrete metric so that , for $x,y \in X_2,~x \neq y,~d_2(x,y)=1$ and for $x=y~d_2(x,y)=0$
   \begin{Def}
   \cite{TG} We define $(\mathbb{B}_j^\infty, \tau) $ be the co-product $(X_1, \tau_1) \otimes (X_2, \tau_2)$ of $(X_1, \tau_1) $ and $(X_2, \tau_2)$ so that every open set in $(\mathbb{B}_j^\infty, \tau)$ is the disjoint union of two open sets $G_1 \cup G_2$ with $G_1 $ in $(X_1, \tau_1)$ and $G_2 $ in $(X_2, \tau_2)$. 
   \end{Def}
   It follows that $\mathbb{B}_j^\infty = \R^\infty$ as sets. However, since every point in $X_2$ is open and closed in $\mathbb{B}_j^\infty$ and no point is open and closed in $\R^\infty,$ So, $\mathbb{B}_j^\infty \neq \R^\infty$ as a topological spaces.  In \cite{GM} shown that it  can extend the measure $ \mu_{\mathbb{B}}(.) $ to $ \mathbb{R}^\infty $.    
 
Similarly, if $B[\R _J^n ]$ is the Borel $\sigma$-algebra for $\R _J^n ,$ then $B[\R_J^n ] \subset B[\R _J^{n+1}] $by: $$\widehat{B}[\mathbb{B}_j^\infty]  = \lim\limits_{n \to \infty} B[\R_J^n ] =\bigcup\limits_{k=1}^{\infty}B[R_J^k].$$Let $B[\mathbb{B}_j^\infty] $be the smallest $\sigma$-algebra containing $\widehat{B}[\mathbb{B}_j^\infty] \cup P(\R^\infty \setminus \bigcup\limits_{k=1}^{\infty}[\R_J^k]),$ where $P(.)$ is the power set.
It is obvious that the class $B[\R _J^\infty ]$ coincides with the Borel $\sigma-$algebra generated by the $\tau-$topology on $\R_J ^\infty.$
%\begin{lem}
%$\widehat{B}[\mathbb{B}_j^\infty] \subset B[\mathbb{B}_j^\infty]$
%\end{lem}
%Proof of this lemma is mention in \cite{GM}.
 \subsection{Measurable function}
  We discuss about measurable function on $\mathbb{B}_j^\infty$ as follows.  
     Let $ x =(x_1,x_2,\dots) \in \mathbb{B}_j^\infty,$  $ J_n = \Pi_{k=n+1}^{\infty}[\frac{-1}{2}, \frac{1}{2}] $ and let $ h_n(\widehat{x})= \chi_{J_n}(\widehat{x}),$ where $\widehat{x} = (x_i)_{i=n+1}^{\infty}.$
     \begin{Def}
     \cite{TG} Let $M^n $ represented the class of measurable functions on $\R^n.$ If $ x \in \mathbb{B}_j^\infty$ and $f^n \in M^n,$ Let $\overline{x}= (x_i)_{i=1}^{n} $ and define an essentially tame measurable function of order $n( $ or $ e_n-$ tame ) on $\mathbb{B}_j^\infty$ by $$f(x)= f^n(\overline{x}) \otimes h_n(\widehat{x}).$$ We let $M_J^n = \{f(x)~: f(x) = f^n( \overline{x}) \otimes h_n(\widehat{x}),~x \in \mathbb{B}_j^\infty \}$ be the class of all $e_n-$ tame function.
     \end{Def}
      
     \begin{Def}
     A function $f: \mathbb{B}_j^\infty \to \R$ is said to be measurable and we write $ f \in M_J$, if there is a sequence $\{f_n \in M_J^n \} $ of $e_n-$ tame functions, such that $$\lim\limits_{ n \to \infty} f_n(x) \to f(x)~\mu_{\mathbb{B}}-(a.e.).$$
     \end{Def}
     This definition highlights  our requirement that all functions on infinite dimensional space must be constructively defined as (essentially) finite dimensional limits.
     The existence of functions satisfying above definition is not obvious. So, we have 
     \begin{thm}
     (Existence) Suppose that $ f: \mathbb{B}_j^\infty \to (- \infty, \infty)$ and $f^{-1}(a) \in B[\mathbb{B}_j^\infty]$ for all $ a \in B[\R]$ then there exists a family of functions $\{f_n \},~f_n \in M_J^n $ such that $f_n(x) \to f(x) , \mu_{\mathbb{B}}(-a.e.)$
     \end{thm}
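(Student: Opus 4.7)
The plan is a monotone-class argument that exploits the tame-function structure of $M_J^n$.

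I would begin with two preliminary observations. First, $\R^\infty \setminus \widehat{\R}_J^\infty$ is $\mu_\mathbb{B}$-null, since each tail factor $\mu_J$ has unit mass on $[-\tfrac12,\tfrac12]$, forcing the measure to concentrate on $\bigcup_k \R_J^k$. Hence it suffices to produce $f_n\in M_J^n$ with $f_n \to f$ on $\widehat{\R}_J^\infty$. Second, $M_J^k \subseteq M_J^n$ for $k \le n$, because
$$h_k(\hat x) = \chi_{[-\frac12,\frac12]}(x_{k+1})\cdots\chi_{[-\frac12,\frac12]}(x_n)\,h_n(\hat x),$$
which lets the extra cut-off factors be absorbed into the revised tame part $f^k\cdot\prod_{j=k+1}^{n}\chi_{[-\frac12,\frac12]}(x_j)\in M^n$.

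Let $\mcH$ denote the class of Borel-measurable $f:\mathbb{B}_j^\infty\to\R$ admitting an approximating sequence $f_n\in M_J^n$ with $f_n\to f$ $\mu_\mathbb{B}$-a.e. The plan is to verify the hypotheses of the functional monotone-class theorem for $\mcH$ in three steps. (i) For each $E \in B[\R_J^k]$ I have $E = A\times J_k$ with $A\in B[\R^k]$, hence $\chi_E(x)=\chi_A(\bar x)\otimes h_k(\hat x)\in M_J^k$; setting $f_n=0$ for $n<k$ and $f_n=\chi_E$ for $n\ge k$ (using the nesting) places $\chi_E\in\mcH$. (ii) $\mcH$ is a vector space, immediate from linearity of $M_J^n$. (iii) $\mcH$ is closed under $\mu_\mathbb{B}$-a.e.\ pointwise limits of sequences: given $g^{(m)}\to f$ a.e.\ with $g^{(m)}\in\mcH$ and approximating sequences $g^{(m)}_n\in M_J^n$, I would use a Borel-Cantelli/Egorov-style diagonal argument---exhausting $\widehat{\R}_J^\infty$ by sets of finite $\mu_\mathbb{B}$-measure and controlling the error $|g^{(m)}_n - g^{(m)}|$ on each---to extract indices $n_m\uparrow\infty$ with $g^{(m)}_{n_m}\to f$ a.e.; padding the remaining indices with zero yields $f_n\in M_J^n$ for every $n$.

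The functional monotone-class theorem then applies: since $\bigcup_k B[\R_J^k]$ is a $\pi$-system generating $\widehat B[\mathbb{B}_j^\infty]$, and $B[\mathbb{B}_j^\infty]\setminus\sigma(\widehat B)$ sits inside the null remainder $\R^\infty \setminus \widehat{\R}_J^\infty$, $\mcH$ contains $\chi_E$ for every $E\in B[\mathbb{B}_j^\infty]$, hence every Borel simple function, hence (by dyadic truncation) every bounded Borel function, and finally (by monotone truncation $f=\lim_n f\,\chi_{|f|\le n}$) the given $f$ itself. The main obstacle I anticipate is the diagonal step (iii): propagating pointwise a.e.\ convergence across a sequence of tame approximations on a $\sigma$-finite (not probability) measure space requires careful $\sigma$-finite exhaustion plus Egorov/Borel-Cantelli bookkeeping to choose the diagonal indices; the other steps are essentially routine.
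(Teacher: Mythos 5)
The paper states this Existence theorem without proof (it is imported from \cite{TG,GM}), so there is no in-paper argument to compare yours against; I can only assess the proposal on its own terms. The overall strategy---a Dynkin/functional monotone-class argument applied to the class $\mcH$ of a.e.-approximable functions---is sound, and the supporting facts you invoke check out against the paper's definitions: $B[\R_J^k]=\{A\times J_k : A\in B[\R^k]\}$, so these indicators are $e_k$-tame; $M_J^k\subseteq M_J^n$ for $k\le n$ by absorbing the extra cut-off factors into the tame part; $\bigcup_k B[\R_J^k]$ is a $\pi$-system whose generated $\sigma$-algebra differs from $B[\mathbb{B}_j^\infty]$ only by subsets of the $\mu_{\mathbb{B}}$-null set $X_2=\R^\infty\setminus\widehat{\R}_J^\infty$ (the Remark immediately following the theorem); and $X_1$ is $\sigma$-finite, so the Egorov/Borel--Cantelli diagonal extraction in step (iii), though only sketched, is a standard argument that does go through.

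Two concrete defects need repair. First, at the end of step (iii) you ``pad the remaining indices with zero'': this produces a sequence with an identically-zero subsequence, which converges to $f$ only if $f=0$ a.e. The fix is the nesting you already established: set $F_n=g^{(m)}_{n_m}$ for $n_m\le n<n_{m+1}$ (and $F_n=0$ for $n<n_1$); then $F_n\in M_J^n$ and $F_n\to f$ a.e. Second, your $\pi$-system does not contain the whole space $\mathbb{B}_j^\infty$, so the Dynkin step requires separately that $1\in\mcH$ (equivalently, that the full space lies in the $\lambda$-system $\{E:\chi_E\in\mcH\}$) in order to pass from a set to its complement; this is true but must be stated: $\chi_{\R_J^n}=\chi_{\R^n}\otimes h_n\in M_J^n$ and $\chi_{\R_J^n}\to\chi_{X_1}=1$ $\mu_{\mathbb{B}}$-a.e. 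With these two repairs, and the measurability reduction ($f$ agrees off the null set $X_2$ with a function measurable for the $\sigma$-algebra generated by $\widehat{B}[\mathbb{B}_j^\infty]$) made explicit, the argument is complete. Your route is in fact more robust than the heuristic suggested by the paper's Remark (that the support of $f$ is concentrated in some $\R_J^N$), which fails for functions such as $f\equiv 1$.
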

     \begin{rem}
     Recalling that any set $a,$ of non zero measure is concentrated in $X_1$ that is $\mu_{\mathbb{B}}(a)= \mu_{\mathbb{B}}(a \cap X_1) $ also follows that the essential support of the limit function $f(x)$ in definition $1.9$ i.e. $\{x:~f(x) \neq 0 \}$ is concentrated in $\R_J^n,$ for some $N.$
     \end{rem}
     \subsection{Integration theory on $\mathbb{B}_j^\infty$}
      We discuss a constructive theory of integration on $\mathbb{B}_j^\infty$ using the known properties of integration on $\R_J^n.$ This approach has the advantages that all the theorems for Lebesgue measure apply. Proofs are similar as for the proof on $\R^n.$ Let $\mcL^1[\R_J^n]$ be the class of integrable functions on $\R_J^n,$ Since $\mcL^1[\R_J^n] \subset \mcL^1[\R_I^{n+1}]$ we define $\mcL^1[\widehat{\R}_J^\infty]= \cup_{n=1}^{\infty}\mcL^1[\R_J^n].$
      \begin{Def}
      We say that a measurable function $ f \in \mcL^1[\mathbb{B}_j^\infty],$ if there is a Cauchy-sequence $\{f_n \} \subset \mcL^1[\widehat{\R}_J^\infty]$ with $f_n \in \mcL^1[\R_J^n] $ and $$\lim\limits_{n \to \infty}f_n(x) = f(x),~\mu_{\mathbb{B}}-(a.e.)$$
      \end{Def}
      \begin{thm}
      $\mcL^1[\mathbb{B}_j^\infty] = \mcL^1[\widehat{\R}_J^\infty]$
      \end{thm}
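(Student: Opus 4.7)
The inclusion $\mcL^1[\widehat{\R}_J^\infty] \subset \mcL^1[\mathbb{B}_j^\infty]$ is the easy direction. Given $f \in \mcL^1[\R_J^N]$ for some $N$, the plan is to exhibit the approximating Cauchy sequence required by Definition~1.9: using the chain $\mcL^1[\R_J^N] \subset \mcL^1[\R_J^{N+1}] \subset \cdots$, set $f_n \equiv 0$ for $n<N$ and $f_n \equiv f$ for $n \geq N$. This sequence is trivially Cauchy (eventually constant) and converges pointwise, hence $\mu_\mathbb{B}$-a.e., to $f$, so $f \in \mcL^1[\mathbb{B}_j^\infty]$.

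For the reverse inclusion, fix $f \in \mcL^1[\mathbb{B}_j^\infty]$ with an associated Cauchy sequence $\{f_n\}$ where $f_n \in \mcL^1[\R_J^n]$ and $f_n \to f$ $\mu_\mathbb{B}$-a.e. Since $\mu_\mathbb{B}$ restricted to each $\R_J^n$ is ordinary $n$-dimensional Lebesgue measure, the $L^1$ norms on the nested family $\mcL^1[\R_J^n] \subset \mcL^1[\R_J^{n+1}]$ are compatible. By completeness of the classical spaces together with the usual passage to an a.e.-convergent subsequence, the Cauchy sequence has an $L^1$-norm limit $g$ that agrees with $f$ $\mu_\mathbb{B}$-a.e. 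The preceding remark supplies the crucial additional information: because $\mu_\mathbb{B}(X_2)=0$, the part of $f$ on $\R^\infty \setminus \widehat{\R}_J^\infty$ is negligible, and the essential support $\{x: f(x) \neq 0\}$ is concentrated in $\R_J^N$ for some $N$. Thus $f$ vanishes $\mu_\mathbb{B}$-a.e.\ outside $\R_J^N$, so $f$ can be identified with an element of $\mcL^1[\R_J^N] \subset \mcL^1[\widehat{\R}_J^\infty]$.

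The main obstacle is this second step: rigorously transferring the $L^1$-Cauchy property and the a.e.\ limit (formulated over the abstract space $\mathbb{B}_j^\infty$) back onto a single finite-dimensional $\R_J^N$. This is where the compatibility of the norms across $n$ must be combined with the essential-support statement from the remark; without that remark, one would only conclude that $f$ lies in some completion of $\bigcup_n \mcL^1[\R_J^n]$, rather than in the union itself. Once this identification is established, the two inclusions together yield $\mcL^1[\mathbb{B}_j^\infty] = \mcL^1[\widehat{\R}_J^\infty]$.
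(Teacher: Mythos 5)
Your proposal is correct and follows essentially the same route as the paper: the substantive direction is handled by invoking the remark that the essential support of the limit function is concentrated in $\R_J^N$ for some $N$, so that $f$ agrees $\mu_{\mathbb{B}}$-a.e.\ with an element of $\mcL^1[\R_J^N]\subset\mcL^1[\widehat{\R}_J^\infty]$, which is exactly the paper's argument that $\mcL^1[\widehat{\R}_J^\infty]$ is closed. Your additional remarks on the trivial inclusion and on $L^1$-completeness only make explicit what the paper leaves implicit.
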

      \begin{proof}
      We know that $ \mcL^1[\R_J^n] \subset \mcL^1[\widehat{R}_J^\infty]$ for all $n.$ we sufficiently need to prove $\mcL^1[\widehat{R}_J^\infty]$ is closed. Let $ f $ be a limit point of $\mcL^1[\widehat{R}_J^\infty]~( f \in \mcL^1[\mathbb{B}_j^\infty])$. If $f=0$ then the result is proved. So we consider $f \neq 0.$ If $a_f$ is the support of $f,$ then $\mu_{\mathbb{B}}(a_f)= \mu_{\mathbb{B}}(a_f \cap X_1).$ Thus $a_f \cup X_1 \subset \R_J^n $ for some $N.$ This means that there is a function $g \in \mcL^1[\R_J^{N+1}] $ with $ \mu_{\mathbb{B}}(\{x:~f(x) \neq g(x) \})=0$. So, $f(x)=g(x)-$a.e. as $\mcL^1[\R_J^n]$ is a set of equivalence classes.So,  $\mcL^1[\mathbb{B}_j^\infty] = \mcL^1[\widehat{\R}_J^\infty]$
      \end{proof}
      \begin{Def}
      If $f \in \mcL^1[\mathbb{B}_j^\infty],$ we define the integral of $f$ by $$ \int_{\mathbb{B}_j^\infty} f(x) d \mu_{\mathbb{B}}(x)= \lim\limits_{n \to \infty} \int_{\mathbb{B}_j^\infty} f_n(x) d \mu_{\mathbb{B}}(x)$$ where $\{f_n \} \subset \mcL^1[\mathbb{B}_j^\infty] $ is any cauchy-sequence converging to $f(x)-$a.e.
      \end{Def}
      \begin{thm}
      If $f \in \mcL^1[\mathbb{B}_j^\infty] $ then the above integral exists and all theorems that are true for $f \in \mcL^1[\R_J^n],$ also hold for $ f \in \mcL^1[\mathbb{B}_j^\infty].$
      \end{thm}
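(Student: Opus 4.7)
The plan is to leverage Theorem 1.13 ($\mcL^1[\mathbb{B}_j^\infty]=\mcL^1[\widehat{\R}_J^\infty]=\bigcup_{n=1}^\infty\mcL^1[\R_J^n]$) together with Remark 1.10 in order to reduce everything to the known finite-dimensional theory. The key observation is that any $f\in\mcL^1[\mathbb{B}_j^\infty]$ agrees $\mu_{\mathbb{B}}$-a.e.\ with a function $g\in\mcL^1[\R_J^N]$ for some finite $N$, and that $\mu_{\mathbb{B}}\restriction_{\R_J^N}$ is exactly $N$-dimensional Lebesgue measure (the trailing factors $J$ each have mass $1$). With this reduction in hand both assertions of the theorem become near-tautological consequences of classical Lebesgue theory on $\R^N$.

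For the existence of the integral in Definition 1.14, I would start from a Cauchy sequence $\{f_n\}\subset\mcL^1[\widehat{\R}_J^\infty]$ with $f_n(x)\to f(x)$ a.e., and use the inequality
\[
\Bigl|\int_{\mathbb{B}_j^\infty} f_n\,d\mu_{\mathbb{B}}-\int_{\mathbb{B}_j^\infty} f_m\,d\mu_{\mathbb{B}}\Bigr|\le \int_{\mathbb{B}_j^\infty}|f_n-f_m|\,d\mu_{\mathbb{B}}=\|f_n-f_m\|_1,
\]
which shows the numerical sequence $\{\int f_n\,d\mu_{\mathbb{B}}\}$ is Cauchy in $\R$, hence convergent. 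Each individual $f_n$ lies in some $\mcL^1[\R_J^{k_n}]$, so each $\int f_n\,d\mu_{\mathbb{B}}$ already equals an ordinary Lebesgue integral on $\R^{k_n}$. To see the limit is independent of the Cauchy sequence, I would take two sequences $\{f_n\},\{g_n\}$ with $f_n,g_n\to f$ a.e., observe that $\{f_n-g_n\}$ is still Cauchy with a.e.\ limit $0$, and conclude $\|f_n-g_n\|_1\to 0$ (using Theorem 1.13 to localise to a common $\R_J^N$ and invoke $\mcL^1$-completeness there); consequently $\lim\int f_n\,d\mu_{\mathbb{B}}=\lim\int g_n\,d\mu_{\mathbb{B}}$.

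For the transfer of the standard theorems (linearity, monotonicity, Monotone Convergence, Fatou, Dominated Convergence, Tonelli/Fubini, absolute continuity, etc.), I would argue as follows: any countable family $\{f^{(k)}\}\subset\mcL^1[\mathbb{B}_j^\infty]$ has essential supports contained, by Remark 1.10, in $\R_J^{N_k}$ for some sequence $N_k$, so the union of their supports sits inside $\widehat{\R}_J^\infty=\bigcup_k\R_J^{N_k}$, which is a $\sigma$-finite measure space exhausted by an increasing chain of finite-dimensional pieces. On each $\R_J^{N}$ the statements are classical Lebesgue theorems on $\R^N$; passing to the increasing union via the continuity of the integral (which is already established for single functions) lifts them to $\widehat{\R}_J^\infty$, and Theorem 1.13 then lifts them to $\mathbb{B}_j^\infty$.

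The main obstacle is the last step: while a single $f$ is effectively $N$-dimensional, the functions appearing in a limit theorem need not share a common $N$. I expect this to be the delicate point, requiring one to truncate a convergence theorem to its restriction on $\R_J^N$, apply the classical result there, and then pass $N\to\infty$ by invoking the definition of the integral as a Cauchy limit. Once this localisation-and-passage-to-the-limit scheme is set up once (say for MCT), DCT and Fatou follow by the standard derivations, and Fubini requires only the additional remark that $\mu_{\mathbb{B}}$ factors as a product measure on each $\R_J^N$.
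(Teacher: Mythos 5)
The paper states this theorem with no proof at all, so there is nothing to compare against line by line; the closest thing it offers is the proof of the analogous statement for $\K^p[\mathbb{B}_J^\infty]$ later on, which takes a different route to existence: there the authors note uniqueness follows from the Cauchy property, and obtain existence by first assuming $f\ge 0$, re-choosing the approximating sequence to be increasing so that the limit of integrals exists by monotonicity, and then decomposing a general $f$ as $f^{+}-f^{-}$. Your route is different and arguably cleaner: the estimate $\bigl|\int f_n\,d\mu_{\mathbb{B}}-\int f_m\,d\mu_{\mathbb{B}}\bigr|\le\|f_n-f_m\|_1$ makes the sequence of integrals Cauchy in $\R$ without any re-selection of the approximating sequence, and it handles signed $f$ at once. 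Your well-definedness argument is the standard Riesz--Fischer one and is essentially right, but note that the parenthetical ``localise to a common $\R_J^N$'' is not literally available: the differences $f_n-g_n$ live in $\R_J^{N_n}$ with $N_n\to\infty$, so there is no single $N$ that works for the whole sequence. What you actually need (and what Theorem 1.13 is supplying) is completeness of $\mcL^1[\widehat{\R}_J^\infty]$ as a whole, plus extraction of an a.e.-convergent subsequence from the $\mcL^1$-convergent one to identify the limit with $0$; stated that way the step is sound. For the second half of the theorem (``all theorems that are true for $\mcL^1[\R_J^n]$ also hold''), your localisation-and-passage-to-the-limit scheme is more than the paper provides, and you correctly identify the one genuinely delicate point, namely that the functions in a limit theorem need not share a common finite-dimensional support; that gap is as much a defect of the theorem's vague formulation as of your sketch, and your proposed truncation argument is the natural way to close it.
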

     
     \subsubsection{Class of $\mathbb{B}_J^\infty,$ where $\B$ is a separable Banach space:}          
     As an application of $\mathbb{B}_j^\infty$, we can construct $\mathbb{B}_J^\infty,$ where $\B$ is separable Banach space.  Recalling  a sequence $(e_n) \in \B $ is called a Schauder basis (S-basis) for $\B$, if $||e_n||_\B=1 $ and for each $x \in \B,$ there is a unique sequence $(x_n) $ of scalars such that $$x=\lim\limits_{k \to \infty}\sum_{n=1}^{k}x_ne_n = \sum_{n=1}^{\infty}x_ne_n.$$
    any sequence $(x_n) $ of scalars associated with a $ x \in \B, ~\lim\limits_{n \to \infty}x_n=0$. Let $$j_k=\left[\frac{-1}{2\ln(k+1)},\frac{1}{2\ln(k+1)} \right] $$ and $$j^n=\Pi_{k=n+1}^{\infty}j_k,~j= \Pi_{k=1}^{\infty}J_k.$$ Let $\{e_k\} $ be an S-basis for $\B,$ and let $x=\sum\limits_{n=1}^{\infty}x_n e_n,$ and from $\mathcal{P}_n(x)= \sum\limits_{k=1}^{n}x_ke_k $ and $\mathcal{Q}_nx=(x_1, x_2,\dots,x_n) $ then  $\mathbb{B}_j^n$ is defined by $$\mathbb{B}_j^n=\{\mathcal{Q}_n x : x \in \B\} \times j^n $$ with norm $$||(x_k)||_{\mathbb{B}_j^n}= \max_{1 \leq k \leq n}||\sum\limits_{i=1}^{k}x_ie_i||_{\B}=\max_{1 \leq k \leq n} ||\mathcal{P}(x)||_\B.$$
  as $\mathbb{B}_j^n \subset \mathbb{B}_J^{n+1} $ so we can set $\mathbb{B}_J^\infty = \bigcup\limits_{n=1}^{\infty}\mathbb{B}_j^n $ and $\mathbb{B}_J $ is a subset of $\mathbb{B}_J^\infty$. \\
  We set $\mathbb{B}_J $ as $\mathbb{B}_J= \left\{(x_1, x_2,\dots): \sum\limits_{k=1}^{\infty}x_ke_k \in \B\right\}$ and norm on $\mathbb{B}_J $ by $$||x||_{\mathbb{B}_J}= \sup_{n}||\mathcal{P}_n(x)||_\B=||x||_\B.$$
   If we consider $\B[\mathbb{B}_J^\infty] $ be the smallest $\sigma$-algebra containing $\mathbb{B}_J^\infty$ and define $\B[\mathbb{B}_J]= \B[\mathbb{B}_J^\infty] \cap \mathbb{B}_J$ then by a known result 
   \begin{equation}\label{eq1}
   ||x||_\B= \sup_{n}||\sum_{k=1}^{n}x_ke_k||_\B.
   \end{equation}
   is an equivalent norm on $\B$. 
   \begin{prop}\cite{TG}
     When $\B $ carries the equivalent norm (\ref{eq1}), the operator $$T:(\B, ||.||_\B) \to (\mathbb{B}_J, ||.||_{\mathbb{B}_J}) $$ defined by $T(x)=(x_k) $ is an isometric isomorphism from $\B$ onto $\mathbb{B}_J.$
    \end{prop}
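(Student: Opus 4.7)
The plan is to verify the four standard pieces of ``isometric isomorphism onto'': well-definedness, linearity, the isometric identity, and surjectivity. Since the map is $T(x)=(x_{k})$, where $x=\sum_{k=1}^{\infty}x_{k}e_{k}$ is the unique Schauder expansion of $x\in\B$, each piece reduces to an unpacking of the definitions of $\mathbb{B}_{J}$ and its norm.

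First I would show $T$ is well-defined and linear. Uniqueness of the Schauder expansion gives a single sequence $(x_{k})$ for each $x\in\B$; since $\sum_{k=1}^{\infty}x_{k}e_{k}=x\in\B$ converges, $(x_{k})$ satisfies the defining condition of $\mathbb{B}_{J}$, so $T(x)\in\mathbb{B}_{J}$. Linearity is immediate because if $x=\sum x_{k}e_{k}$ and $y=\sum y_{k}e_{k}$, then $\alpha x+\beta y=\sum(\alpha x_{k}+\beta y_{k})e_{k}$ by continuity of addition and scalar multiplication in $\B$, and uniqueness of coefficients forces $T(\alpha x+\beta y)=\alpha T(x)+\beta T(y)$.

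Next I would establish the isometry. By the definition of $\|\cdot\|_{\mathbb{B}_{J}}$,
\[
\|T(x)\|_{\mathbb{B}_{J}}=\sup_{n}\|\mathcal{P}_{n}(x)\|_{\B}=\sup_{n}\Bigl\|\sum_{k=1}^{n}x_{k}e_{k}\Bigr\|_{\B},
\]
and by (\ref{eq1}) this last quantity is precisely the equivalent norm $\|x\|_{\B}$. Hence $\|T(x)\|_{\mathbb{B}_{J}}=\|x\|_{\B}$, which simultaneously gives the isometric identity and injectivity of $T$.

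For surjectivity, take any $(x_{k})\in\mathbb{B}_{J}$. By the very definition of $\mathbb{B}_{J}$, the series $\sum_{k=1}^{\infty}x_{k}e_{k}$ converges in $\B$; call its sum $x$. Uniqueness of the Schauder expansion of $x$ with respect to $(e_{n})$ forces its coefficient sequence to be $(x_{k})$, so $T(x)=(x_{k})$. The only subtle point, and the one I expect to be the main obstacle, is justifying that (\ref{eq1}) really does define an equivalent norm on $\B$: this is the classical fact that the partial-sum projections $\mathcal{P}_{n}$ associated with a Schauder basis are uniformly bounded (the basis constant), so $\|x\|_{\B}\le\sup_{n}\|\mathcal{P}_{n}(x)\|_{\B}\le K\|x\|_{\B}$ for some $K<\infty$. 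Once this is invoked (or cited from standard basis theory), the four items above assemble into the claimed isometric isomorphism.
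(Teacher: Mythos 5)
Your proof is correct, and it is the standard argument: the paper itself offers no proof of this proposition, simply citing Gill--Zachary \cite{TG}, so there is nothing internal to compare against. Your verification of well-definedness, linearity, the isometric identity via (\ref{eq1}), and surjectivity is exactly what is needed, and you correctly isolate the one genuinely nontrivial ingredient, namely that $\sup_{n}\|\mathcal{P}_{n}(x)\|_{\B}$ is an equivalent norm because the partial-sum projections of a Schauder basis are uniformly bounded (the basis constant). Invoking that classical fact, as you do, closes the argument.
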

    This shows that every Banach space with an S-basis has a natural embedding in $\mathbb{B}_j^\infty.$\\
     So we call $\mathbb{B}_J $ the canonical representation of $\B $ in $\mathbb{B}_j^\infty.$ 
     With $\B[\mathbb{B}_J]= \mathbb{B}_J \cap \B[\mathbb{B}_J^\infty] $ we define $\sigma-$algebra generated by $\B$ and associated with $\B[\mathbb{B}_J] $ by $$\mathbb{B}_J[\B]=\{T^{-1}(a)~| a \in \B[\mathbb{B}_J]\} = T^{-1}\{\B[\mathbb{B}_J]\}.$$ Since $\mu_{\mathbb{B}}(a_j^n)=0 $ for $a_j^n \in \B[\mathbb{B}_j^n]$ with $a_j^n$ compact, we see $\mu_{\mathbb{B}}(\mathbb{B}_j^n)=0,~n \in \N$. So, $\mu_{\mathbb{B}}(\mathbb{B}_J)=0 $ for every Banach space with an S-basis. Thus the restriction of $\mu_{\mathbb{B}} $ to $\mathbb{B}_J $ will not induce a non trivial measure on $\B.$ 
     \begin{Def}
     \cite{TG, YY}
      We define $\overline{v}_k,~\overline{\m}_k $ on $a \in B[\R]$ by $$\overline{v}_k(a)= \frac{\mu(a)}{\mu(j_k)},~\overline{\m}_k(a)= \frac{\mu(a \cap j_k)}{\mu(j_k)}$$ and for an elementary set $a = \pi_{k=1}^{\infty}a_k \in B[\mathbb{B}_j^n],~$ define $\overline{V}_j^n $ by $$\overline{V}_j^n= \pi_{k=1}^{n}\overline{v}_k(a) \times \pi_{k={n+1}}^{\infty}\overline{\m}_k(a).$$  
     \end{Def}
     We define $V_j^n $ to be the Lebesgue extension of $\overline{V}_j^n $ to all $\mathbb{B}_j^n$ and $V_j(a)= \lim\limits_{n \to \infty}V_j^n(a)~\forall a \in B[\mathbb{B}_J].$ We adopt a variation of method developed by Yamasaki \cite{YY}, to define $V_j^n $ to the Lebesgue extension of $\overline{V}_j^{n} $ to all of $\mathbb{B}_j^n$ and define $V_j(\B)= \lim\limits_{n \to \infty}V_j^n(\B), \forall \B \in \B[\mathbb{B}_J].$
       
     \begin{rem}              
     $\mathbb{B}_J $ is image of $\B $ in $\mathbb{B}_j^\infty$. The image becomes an isometric isomorphism if the norm we define $||u_1, u_2,\dots,u_n||_{\mathbb{B}_J}=||u||_\B.$      
          \end{rem}
          
    \subsubsection{Integration Theory on $\mathbb{B}_J^\infty:$}
        In this section , we discuss integration on a separable Banach space $\mathbb{B}_J^\infty$ with an $S-$basis. Even if a reasonable theory of Lebesgue measure exists on $\mathbb{B}_J^\infty,$ this is not sufficient to make it a useful mathematical tool. In additional, all the theory developed for finite dimensional analysis. One must be able to approximate an infinite-dimensional problem as a natural limit of the finite dimensional case in a manner that leads itself to computational implementation. Recalling $\mu_{\mathbb{B}}$ restricted to $B[\mathbb{B}_{j}^{n}] $ is equivalent to $\mu\mathbb{A}_n.$ We required that the integral restricted to $B[\mathbb{B}_j^n]$ to the integral on $\R^n.$ Let $f: \mathbb{B} \to [0, \infty]$ be a measurable function and let $\mu_{\mathbb{B}}$ be constructed using the family $\{j_k\}$. If $\{i_n\} \subset M $ is an increasing family of non negative simple functions with $i_n \in M_J^n,$ for each $n$ and $\lim\limits_{n \to \infty}i_n(x) = f(x),~\mu_{\mathbb{B}}-$a.e. We define the integral of $f$ over $\mathbb{B}_J^\infty$ by $$\int_{\mathbb{B}_J^\infty}f(x)d\mu_{\mathbb{B}} = \lim\limits_{n \to \infty}\int_{\mathbb{B}_J^\infty}[i_n(x) \pi_{i=1}^{n}\mu(J_i)]d \mu_{\mathbb{B}}(x).$$
        Let $\mcL^1[\mathbb{B}_j^n] $ be the class of integrable functions on $\mathbb{B}_j^n.$ Since $\mcL^1[\mathbb{B}_j^n] \subset \mcL^1[\B_{j}^{n+1} ], $ we define $\mcL^1[\widehat{\B}_j^n] = \bigcup_{n=1}^{\infty}\mcL^1[\mathbb{B}_j^n].$
        \begin{enumerate}
        \item We say that a measurable function $ f \in \mcL^1[\mathbb{B}_J^\infty] $ if there exists a Cauchy sequence $\{f_m\} \subset \mcL^1[\widehat{\B}_j^\infty],$ such that $$\lim\limits_{m \to \infty}\int_{\mathbb{B}_J^\infty}|f_m(x)- f(x)|d \mu_{\mathbb{B}}(x)=0.$$ That is a measurable function $ f \in \mcL^1[\mathbb{B}_J^\infty] $ if there exists a Cauchy sequence $\{f_m\} \subset \mcL^1[\widehat{\B}_j^\infty],$ with $f_m \in \mcL^1[\mathbb{B}_j^n] $ and $$\lim\limits_{m \to \infty}f_m(x) = f(x),~\mu_{\mathbb{B}}-(a.e.)$$
        \item We say that a measurable function $f \in C_0[\mathbb{B}_J^\infty],$ the space of continuous functions that vanish at infinity, if there exists a Cauchy sequence $\{f_m\} \subset C_0[\widehat{\B}_j^\infty],$ such that $$\lim\limits_{m \to \infty}\int_{\mathbb{B}_J^\infty}\sup_{ x \in \mathbb{B}_J^\infty}|f_m(x)- f(x)|d \mu_{\mathbb{B}}(x)=0.$$
        \end{enumerate}
        \begin{thm}
        $\mcL^1[\widehat{\B}_j^\infty] = \mcL^1[\mathbb{B}_J^\infty].$
        \end{thm}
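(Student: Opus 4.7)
The plan is to mirror the proof of Theorem 1.11, exploiting that $\mathbb{B}_J^\infty = \bigcup_{n=1}^\infty \mathbb{B}_j^n$ is the ascending union of finite-dimensional cylinders. The inclusion $\mcL^1[\widehat{\B}_j^\infty] \subset \mcL^1[\mathbb{B}_J^\infty]$ is automatic from Definition 1.14(1): any $f \in \mcL^1[\mathbb{B}_j^n] \subset \mcL^1[\widehat{\B}_j^\infty]$ is the $L^1$- and a.e.-limit of the constant Cauchy sequence $f_m := f$. So the content of the theorem lies entirely in the reverse inclusion, which is equivalent to showing that $\mcL^1[\widehat{\B}_j^\infty]$ is already closed under the Cauchy/a.e.-limit operation used to define $\mcL^1[\mathbb{B}_J^\infty]$.

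Fix $f \in \mcL^1[\mathbb{B}_J^\infty]$. If $f = 0$ $\mu_{\mathbb{B}}$-a.e.\ the conclusion is trivial, so assume $f \neq 0$ and let $a_f := \{x : f(x) \neq 0\}$ denote its essential support. In the style of Remark 1.10, I would argue that $\mu_{\mathbb{B}}$ is essentially concentrated on the finite-dimensional cylinders, so that $\mu_{\mathbb{B}}(a_f \setminus \mathbb{B}_j^N) = 0$ for some finite $N$. Granted this, the truncation $g := f\,\chi_{\mathbb{B}_j^{N+1}}$ lies in $\mcL^1[\mathbb{B}_j^{N+1}] \subset \mcL^1[\widehat{\B}_j^\infty]$ and agrees with $f$ $\mu_{\mathbb{B}}$-a.e.; since $\mcL^1$ is a space of equivalence classes modulo a.e.\ equality, we conclude $f \in \mcL^1[\widehat{\B}_j^\infty]$, completing the argument.

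The main obstacle is producing the finite index $N$. Bare continuity of measure from below only gives $\mu_{\mathbb{B}}(a_f \cap \mathbb{B}_j^n) \nearrow \mu_{\mathbb{B}}(a_f)$, not equality at a single stage. The mechanism must therefore come from the construction itself: the defining Cauchy sequence $\{f_m\} \subset \mcL^1[\widehat{\B}_j^\infty]$ consists of functions supported in cylinders $\mathbb{B}_j^{n_m}$, and the way $\mu_{\mathbb{B}}$ is assembled from the family $\{V_j^n\}$ should force the limit's essential support to sit inside a single $\mathbb{B}_j^N$. Passing to an a.e.-convergent subsequence and carefully tracking supports is the one nonroutine step; once it is in hand, the rest of the argument is a direct transcription of the proof of Theorem 1.11.
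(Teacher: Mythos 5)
Your route is the paper's route. The paper states this theorem without proof, but the intended argument is clearly the one it writes out for the analogous Theorem 1.11 ($\mcL^1[\mathbb{B}_j^\infty]=\mcL^1[\widehat{\R}_J^\infty]$), and your outline reproduces it step for step: the inclusion of the union into the limit space is immediate, so the content is the closedness of $\mcL^1[\widehat{\B}_j^\infty]$; one disposes of $f=0$, takes the essential support $a_f$ of a nonzero limit point, locates it inside a single cylinder $\mathbb{B}_j^{N+1}$, and identifies $f$ a.e.\ with a member of $\mcL^1[\mathbb{B}_j^{N+1}]$, using that $\mcL^1$ consists of equivalence classes. Nothing in your decomposition differs from the paper's.

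The one step you leave conditional --- producing a finite $N$ with $\mu_{\mathbb{B}}(a_f\setminus\mathbb{B}_j^N)=0$ --- is exactly the step the paper discharges by appeal to Remark 1.10, which asserts (it is ``recalled,'' not proved) that any set of nonzero measure is concentrated in $X_1$ and that the essential support of such a limit function is concentrated in a cylinder of some finite order $N$. So the ``mechanism'' you are looking for is simply that remark; at the paper's own level of rigor you would cite its analogue for $\mathbb{B}_J^\infty$ and be done. Your suspicion that continuity from below is insufficient and that the finiteness of $N$ must be extracted from the defining Cauchy sequence is, however, well founded: neither Remark 1.10 nor the proof of Theorem 1.11 actually performs that extraction, and it is not automatic --- a limit of the form $f=\sum_k c_k g_k$ with $g_k$ supported in $\mathbb{B}_j^{k+1}\setminus\mathbb{B}_j^k$ and $\sum_k c_k\lVert g_k\rVert_1<\infty$ arises as the a.e.\ limit of a Cauchy sequence of partial sums in $\mcL^1[\widehat{\B}_j^\infty]$, yet its essential support lies in no single $\mathbb{B}_j^N$. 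So you have correctly isolated the point at which both your sketch and the paper's argument are incomplete, rather than overlooked an available tool; but as submitted your proposal does not close that step, and the theorem is not proved without it.
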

        \begin{Def}
        If $f \in \mcL^1[\mathbb{B}_J^\infty],$ we define the integral of $f$ by  $$\lim\limits_{m \to \infty}\int_{\mathbb{B}_J^\infty}f_m(x)d \mu_{\mathbb{B}}(x) = \int_{\mathbb{B}_J^\infty}f(x)d \mu_{\mathbb{B}}(x),~\mu_{\mathbb{B}}-(a.e.)$$ where $\{f_m\} \subset \mcL^1[\mathbb{B}_J^\infty] $ is any Cauchy sequence converging to $f(x)-$a.e.
        \end{Def}
       \begin{thm}
        If $f \in L ^1 [\mathbb{B}_J^\infty],$ then the above integral exists and all theorems that are true for $f \in \mcL^1[\mathbb{B}_j^n],$ also hold for $f \in \mcL^1[\mathbb{B}_J^\infty].$
        \end{thm}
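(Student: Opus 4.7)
The proof plan splits naturally into two parts: first, verify that the integral in Definition 1.14 is well defined, and second, explain the general transfer mechanism from $\mcL^1[\mathbb{B}_j^n]$ to $\mcL^1[\mathbb{B}_J^\infty]$.

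For existence, I would start from the fact that, given a Cauchy sequence $\{f_m\} \subset \mcL^1[\widehat{\B}_j^\infty]$ converging $\mu_{\mathbb{B}}$-a.e.\ to $f$, the inequality
\[
\left| \int_{\mathbb{B}_J^\infty} f_m \, d\mu_{\mathbb{B}} - \int_{\mathbb{B}_J^\infty} f_k \, d\mu_{\mathbb{B}} \right| \;\le\; \int_{\mathbb{B}_J^\infty} |f_m - f_k|\, d\mu_{\mathbb{B}}
\]
shows the numerical sequence of integrals is Cauchy in $\R$, hence convergent. To show the value is independent of the Cauchy sequence, I would use the standard interleaving argument: given two such sequences $\{f_m\},\{g_m\}$ converging a.e.\ to $f$, the sequence $h_{2m-1}=f_m$, $h_{2m}=g_m$ is again $L^1$-Cauchy (combining the triangle inequality with a Fatou argument on each $\mathbb{B}_j^n$ to control $\|f_m-g_m\|_1$), so all three limits of integrals agree.

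For the transfer of theorems, the crucial ingredient is the previously established identity $\mcL^1[\widehat{\B}_j^\infty] = \mcL^1[\mathbb{B}_J^\infty]$ together with the essential-support observation of Remark~1.10. For single-function statements (linearity, absolute integrability, translation/rotation invariance, change of variables), this identity immediately places $f$, up to a $\mu_{\mathbb{B}}$-null set, inside some $\mcL^1[\mathbb{B}_j^N]$, where the assertion is already known; Definition 1.14 then promotes the finite-dimensional conclusion back to $\mathbb{B}_J^\infty$. For sequence-indexed statements (monotone convergence, dominated convergence, Fatou, Fubini-type results), the recipe is: choose $N$ large enough that the essential supports of all relevant functions lie in $\mathbb{B}_j^N$, apply the known $\R_J^N$-level theorem, and pass to the $L^1$-limit prescribed by Definition 1.14.

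The main obstacle is precisely this last step when the indexing is infinite: the essential supports of $\{f_k\}$ may live in $\mathbb{B}_j^{n_k}$ with $n_k \to \infty$, so no single finite-dimensional layer accommodates the whole sequence. I would handle this by a diagonal exhaustion --- approximate each $f_k$ by its canonical $e_n$-tame representatives from Definition 1.8, extract a diagonal subsequence whose tame orders stabilize in the sense of cofinal inclusion in $\widehat{\B}_j^\infty = \bigcup_n \mathbb{B}_j^n$, apply the corresponding $\mathbb{B}_j^N$-theorem, and then use $L^1$-continuity of the integral (established in the first part of the proof) to take the outer limit. Once this concentration-on-$X_1$ property is properly exploited, every finite-dimensional theorem carries over verbatim, which is the content of the meta-statement to be proved.
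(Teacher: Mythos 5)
The paper actually states this theorem without any proof, so the only internal point of comparison is the argument it later gives for the analogous claim in $\K^p[\B_{j}^{\infty}]$: there, uniqueness of the integral is deduced from the Cauchy property of the approximating sequence, and existence is obtained by first assuming $f\ge 0$, choosing an increasing approximating sequence, and then invoking the standard decomposition $f=f^{+}-f^{-}$. Your route to the first clause is genuinely different and, in my view, tighter: the estimate $\lt|\int f_m\,d\mu_{\mathbb{B}}-\int f_k\,d\mu_{\mathbb{B}}\rt|\le \|f_m-f_k\|_1$ gives existence of the limit directly from completeness of $\R$, and the interleaving-plus-Fatou argument gives independence of the chosen Cauchy sequence; this avoids having to justify that an increasing approximating sequence ``can always be chosen,'' which the paper's version asserts without argument. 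Both are standard completion arguments and both are correct, provided ``Cauchy'' in the definition of $\mcL^1[\mathbb{B}_J^\infty]$ is read as Cauchy in the $\mcL^1$-norm (the paper is ambiguous on this, and your Fatou step is exactly what reconciles the a.e.-convergence formulation with the norm formulation). For the second clause --- the meta-claim that ``all theorems'' transfer --- no proof is possible in general and none is offered by the paper; your reduction via $\mcL^1[\widehat{\B}_j^\infty]=\mcL^1[\mathbb{B}_J^\infty]$ and the concentration of essential supports in a finite layer is the right mechanism for single-function statements. The one soft spot is your ``diagonal exhaustion'' for sequence-indexed theorems: as written it is too vague to check. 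But it is also largely unnecessary, since for the standard convergence theorems the hypotheses already supply a single integrable object (the dominating function in dominated convergence, the limit in monotone convergence) whose essential support lies in one $\mathbb{B}_j^N$ and hence traps the whole sequence there; you could replace the diagonal argument by this observation and lose nothing.
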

       \begin{lem}
\cite{KB}
( Kuelbs Lemma): If $\B$ is a separable Banach space, there exists a separable Hilbert space $H \supset \B $ as a continuous dense embedding.
\end{lem}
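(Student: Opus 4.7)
The plan is to exploit separability of $\B$ to extract a countable family of norming functionals, use these to define a Hilbertian inner product on $\B$ that is dominated by the original norm, and then take the completion.

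First, since $\B$ is separable, I choose a sequence $\{x_n\}$ that is dense in the unit sphere $\{x \in \B : \|x\|_\B = 1\}$. For each $n$, Hahn--Banach yields a functional $x_n^* \in \B^*$ with $\|x_n^*\|_{\B^*} = 1$ and $x_n^*(x_n) = 1$. The crucial consequence I will use is that $\{x_n^*\}$ is a norming family: for any $x \in \B$, given $\varepsilon > 0$, pick $n$ with $\|x/\|x\|_\B - x_n\|_\B < \varepsilon$, so that $|x_n^*(x)| \geq \|x\|_\B(1 - \varepsilon)$. Combined with the trivial bound $|x_n^*(x)| \leq \|x\|_\B$, this shows $\|x\|_\B = \sup_n |x_n^*(x)|$, and in particular $\{x_n^*\}$ separates points.

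Next, I define an inner product on $\B$ by
\[
\langle x, y \rangle_H \;=\; \sum_{n=1}^\infty \frac{1}{2^n}\, x_n^*(x)\, \overline{x_n^*(y)}.
\]
Since $|x_n^*(x)| \leq \|x\|_\B$ for every $n$, the induced seminorm satisfies
\[
\|x\|_H^2 \;=\; \sum_{n=1}^\infty \frac{1}{2^n}\, |x_n^*(x)|^2 \;\leq\; \|x\|_\B^2,
\]
so the series converges absolutely and $\|\cdot\|_H \leq \|\cdot\|_\B$. Non-degeneracy follows because $\|x\|_H = 0$ forces $x_n^*(x) = 0$ for all $n$, and then the norming property gives $\|x\|_\B = 0$. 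Thus $(\B, \langle\cdot,\cdot\rangle_H)$ is a genuine inner product space.

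Finally, I let $H$ denote the completion of $(\B, \|\cdot\|_H)$. Then $H$ is a Hilbert space, and $\B$ sits inside $H$ as a dense subspace (by the very definition of completion), while the inclusion map $\B \hookrightarrow H$ is continuous with norm at most $1$ by the estimate above. Separability of $H$ is inherited from the density of any countable $\|\cdot\|_\B$-dense subset of $\B$ (which remains $\|\cdot\|_H$-dense since $\|\cdot\|_H \leq \|\cdot\|_\B$).

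The only real subtlety is the verification that $\{x_n^*\}$ norms $\B$; once this is in place, the rest is bookkeeping. The step I would double-check most carefully is the construction of the dense sequence on the unit sphere (rather than in the unit ball), so that the Hahn--Banach functionals chosen at $x_n$ actually attain value $1$ and produce a sharp lower bound $\|x\|_\B(1-\varepsilon)$ on $|x_n^*(x)|$; otherwise non-degeneracy of $\|\cdot\|_H$ becomes delicate.
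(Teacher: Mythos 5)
Your proof is correct and is exactly the standard Kuelbs construction that the cited reference uses: a countable norming family of unit functionals obtained from a dense sequence in the unit sphere, a weighted $\ell^2$-type inner product dominated by the original norm, and completion. The paper itself gives no proof (it only cites Kuelbs), but your argument is the same one the authors implicitly replicate in Section 2 when they build $\K^2[\mathbb{B}_j^n]$ from a countable dense set in the unit ball of $\mcL^1$, duality mappings $\{\mcE_k^*\}$, and weights $t_k>0$ with $\sum_k t_k=1$.
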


    \section{The Kuelbs-Steadman  space $\K^p[\mathbb{B}]$, where $\mathbb{B}$ is  separable Banach space:}
  Now we proceed for the construction of the canonical space $\K^p[\mathbb{B}_J^\infty].$ 
    % We construct $\K^p[\mathcal{B}] $ as\\
    Let $\mathbb{B}_j^n$ be separable Banach space with $S-$basis, let $\K^p[\mathbb{\widehat{B}}_j^n] = \bigcup\limits_{k=1}^{\infty}\K^p[\mathbb{B}_j^k],$ and let $C_0[\mathbb{\widehat{B}}_j^n] =\bigcup\limits_{n=1}^{\infty}C_0[\mathbb{B}_j^n].$
    \begin{Def}
    A measurable function $ f \in \K^p[\mathbb{B}_j^n] $ if there exists a Cauchy sequence $\{f_m\} \subset \K^p[\widehat{\mathbb{B}}_j^n],$ with $f_m \in \K^p[\widehat{\mathbb{B}}_j^n] $ and $$\lim\limits_{m \to \infty}f_m(x) = f(x),~\mu_{\mathbb{B}}-(a.e.)$$
     \end{Def}   
        \begin{thm}
        $\K^p[\widehat{\B}_j^n] = \K^p[\mathbb{B}_j^n].$
        \end{thm}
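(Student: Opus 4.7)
The statement is the exact analogue, for the Kuelbs-Steadman scale, of Theorem 1.13 (where $\mcL^1[\widehat{\R}_J^\infty] = \mcL^1[\mathbb{B}_j^\infty]$), so my plan is to mimic that argument with only the adjustments forced by replacing $\mcL^1$ with $\K^p$ and $\R_J^n$ with $\mathbb{B}_j^k$. Throughout I will use the convention from the paragraph preceding the theorem, namely $\K^p[\widehat{\mathbb{B}}_j^n] = \bigcup_{k=1}^{\infty} \K^p[\mathbb{B}_j^k]$, and interpret the $\K^p[\mathbb{B}_j^n]$ on the right-hand side as the space of $\m_{\mathbb{B}}$-a.e.\ limits of Cauchy sequences drawn from this union (as in the Definition just stated).

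First I would dispose of the easy inclusion $\K^p[\widehat{\mathbb{B}}_j^n] \subset \K^p[\mathbb{B}_j^n]$: if $f \in \K^p[\mathbb{B}_j^k]$ for some $k$, then the constant sequence $f_m = f$ is trivially Cauchy in $\K^p[\widehat{\mathbb{B}}_j^n]$ and converges $\m_{\mathbb{B}}$-a.e.\ to $f$, so $f \in \K^p[\mathbb{B}_j^n]$.

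The substantive direction is $\K^p[\mathbb{B}_j^n] \subset \K^p[\widehat{\mathbb{B}}_j^n]$, i.e.\ the union is already closed under the defining limit operation. Let $f \in \K^p[\mathbb{B}_j^n]$. If $f = 0$ $\m_{\mathbb{B}}$-a.e.\ there is nothing to prove, so suppose $f$ is nontrivial and let $a_f = \{x : f(x) \neq 0\}$ denote its essential support. I will invoke Remark 1.11 (in the Banach-space form): any set of nonzero $\m_{\mathbb{B}}$-measure sits, up to null sets, in $X_1$, so $\m_{\mathbb{B}}(a_f) = \m_{\mathbb{B}}(a_f \cap X_1)$, and the essential support of $f$ is concentrated in $\mathbb{B}_j^N$ for some finite $N$. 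Consequently one may produce a representative $g \in \K^p[\mathbb{B}_j^{N+1}]$ with $\m_{\mathbb{B}}(\{x : f(x) \neq g(x)\}) = 0$; since $\K^p[\mathbb{B}_j^{N+1}]$ consists of equivalence classes modulo $\m_{\mathbb{B}}$-a.e.\ equality, $f = g$ in $\K^p[\mathbb{B}_j^{N+1}] \subset \K^p[\widehat{\mathbb{B}}_j^n]$, and we are done.

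The main obstacle, as in Theorem 1.13, is verifying that the essential-support concentration step actually transfers from the $\R_J^n$ setting to the $\mathbb{B}_j^n$ setting — that is, checking that Remark 1.11 is genuinely available here, so that the support of a Cauchy-a.e.\ limit really does lie in some finite-dimensional $\mathbb{B}_j^N$. Once this is in hand, promoting the pointwise limit to a bona fide element of $\K^p[\mathbb{B}_j^{N+1}]$ is routine, since the Cauchy condition in $\K^p$-norm combined with a.e.\ convergence yields $\K^p$-convergence on any $\mathbb{B}_j^k$ containing the essential support, and $\K^p[\mathbb{B}_j^k]$ is complete.
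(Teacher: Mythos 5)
The paper states this theorem without any proof, and your argument is exactly the adaptation of the paper's own proof of the analogous result $\mcL^1[\widehat{\R}_J^\infty]=\mcL^1[\mathbb{B}_j^\infty]$ (Theorem 1.13) that the authors evidently intend: the trivial inclusion of the union, followed by the support-concentration argument (Remark 1.11) to show the union is already closed under a.e.\ limits of Cauchy sequences. Your proof is therefore correct at the same level of rigor as the paper's, and you rightly flag the one genuinely delicate point the paper also glosses over, namely that the essential support of the limit function is concentrated in some finite-dimensional $\mathbb{B}_j^N$ (and, implicitly, that an a.e.\ limit of a $\K^p$-Cauchy sequence is its $\K^p$-limit, which is less automatic for the weak $\K^p$-norm than for $\mcL^1$).
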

        \begin{Def}
        If $f \in \K^p[\mathbb{B}_j^n],$ we define the integral of $f$ by  $$\lim\limits_{m \to \infty}\int_{\mathbb{B}_j^n}f_m(x)d \mu_{\mathbb{B}}(x) = \int_{\B}f(x)d \mu_{\mathbb{B}}(x),~\mu_{\mathbb{B}}-(a.e.)$$ where $\{f_m\} \subset \K^p[\mathbb{B}_j^n] $ is any Cauchy sequence converging to $f(x)-$a.e.
        \end{Def}
       \begin{thm}
        If $f \in \K^p [\mathbb{B}_j^n],$ then the above integral exists.
        \end{thm}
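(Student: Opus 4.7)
The plan is to show that the defining limit
\[
\lim_{m \to \infty} \int_{\mathbb{B}_j^n} f_m(x)\, d\mu_{\mathbb{B}}(x)
\]
exists in the scalar field and is independent of the chosen Cauchy sequence $\{f_m\}$. The natural route is to exploit continuity of the scalar integration functional on each finite-level space $\K^p[\mathbb{B}_j^k]$ (inherited from the Gill--Zachary construction of the Kuelbs--Steadman norm) to promote Cauchyness of $\{f_m\}$ in the $\K^p$-norm to Cauchyness of the numerical integrals, and then to verify well-definedness by an interleaving argument.

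First I would establish that the linear functional
\[
\Lambda(g) \;=\; \int_{\mathbb{B}_j^n} g(x)\, d\mu_{\mathbb{B}}(x)
\]
is bounded on $\K^p[\widehat{\mathbb{B}}_j^n]$ with a norm-bound uniform in the level $k$. This should follow from the integral-kernel definition of the Kuelbs--Steadman norm recalled from \cite{TG, KB}, which is engineered so that $|\Lambda(g)| \leq C\,\|g\|_{\K^p}$ for $g$ in the dense subspace $\mcL^1[\mathbb{B}_j^k] \subset \K^p[\mathbb{B}_j^k]$, and hence extends by continuity to all of $\K^p$. Given this,
\[
|\Lambda(f_m) - \Lambda(f_\ell)| \;\leq\; C\,\|f_m - f_\ell\|_{\K^p}
\]
shows that $\{\Lambda(f_m)\}$ is Cauchy in $\R$, hence convergent by completeness of the scalar field.

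To confirm independence of the approximating sequence, I would take two Cauchy sequences $\{f_m\}, \{g_m\} \subset \K^p[\widehat{\mathbb{B}}_j^n]$ with $f_m \to f$ and $g_m \to f$ $\mu_{\mathbb{B}}$-a.e., and interleave them as $h_{2m-1} = f_m$, $h_{2m} = g_m$. This sequence still converges $\mu_{\mathbb{B}}$-a.e.\ to $f$, and by the triangle inequality combined with the $\K^p$-Cauchy property of both parents one checks that $\{h_m\}$ is itself $\K^p$-Cauchy. Applying the previous step to $\{h_m\}$ forces the common limit $\lim_m \Lambda(f_m) = \lim_m \Lambda(g_m)$, so the integral does not depend on the representative sequence.

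The main obstacle is the boundedness step: pointwise a.e.\ convergence alone does not imply convergence of integrals (the standard moving-bump example refutes it), so the argument must genuinely use the $\K^p$-Cauchy hypothesis rather than just a.e.\ convergence. The Kuelbs--Steadman norm is precisely designed to supply this missing continuity through its defining integral kernel, as exploited in \cite{TG}; once that estimate is recorded cleanly in the present $\mathbb{B}_j^n$ setting, the proof reduces to a finite-dimensional-stage transfer of the analogous integral-existence theorem already established for $\mcL^1[\mathbb{B}_J^\infty]$.
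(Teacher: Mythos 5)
Your overall architecture --- continuity of the integration functional on $\K^p$ plus an interleaving argument for well-definedness --- would work if its first step were true, but that step is precisely where the proposal breaks down. The estimate $|\Lambda(g)|\le C\,\|g\|_{\K^p}$ is asserted rather than proved, and for the Kuelbs--Steadman norm actually in play it fails on an unbounded domain such as $\mathbb{B}_j^n\cong\R^n\times j^n$. The norm $\|g\|_{\K^p}=\left(\sum_k t_k\left|\int_{\mathbb{B}_j^n}\mcE_k(x)g(x)\,d\mu_{\mathbb{B}}(x)\right|^p\right)^{1/p}$ only sees the integrals of $g$ against the kernels $\mcE_k$, each of which (in the Gill--Zachary construction, consistent with the paper's use of $|\mcE(x)|=\mcE(x)\le1$) is the characteristic function of a bounded box. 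A translating bump $g_m=\chi_{E_m}$, with $E_m$ a unit box escaping to infinity, satisfies $\int\mcE_k g_m\,d\mu_{\mathbb{B}}\to0$ for each fixed $k$, hence $\|g_m\|_{\K^p}\to0$ by dominated convergence of the series $\sum_k t_k(\cdot)$, while $\Lambda(g_m)\equiv1$. So no such constant $C$ exists: the very moving-bump phenomenon you cite to explain why a.e.\ convergence alone is insufficient also defeats the claimed $\K^p$-continuity of $\Lambda$, and the interleaving step, which is fine in itself, is left with nothing to stand on. The estimate would be salvageable only if the whole space (or a uniformly weighted exhausting family) appeared among the $\mcE_k$, which is not the case here.

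For comparison, the paper leaves this particular theorem unproved and argues only its $\K^p[\mathbb{B}_J^\infty]$ analogue, by a different and more elementary route: for $f\ge0$ the approximating sequence can be chosen increasing, so the numerical integrals $\int f_m\,d\mu_{\mathbb{B}}$ are monotone and converge in $[0,\infty]$, and the general case follows from the decomposition $f=f^+-f^-$; uniqueness is attributed to the Cauchy property. That argument avoids any claim that integration is $\K^p$-continuous, though it is itself sketchy on why the limit is independent of the chosen sequence. If you want a correct version of your approach, you would need the sequence $\{f_m\}$ to be Cauchy in $\mcL^1[\mathbb{B}_j^n]$ (not merely in $\K^p$), under which the Cauchyness of $\int f_m\,d\mu_{\mathbb{B}}$ is immediate; as written, $\K^p$-Cauchyness together with a.e.\ convergence does not determine $\lim_m\int f_m\,d\mu_{\mathbb{B}}$.
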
   
\subsection{\bf{For the construction of   $\K^p[\mathbb{B}_j^n]$ :}}We start with $\mcL^1[\mathbb{B}_j^n],$ Choosing a countable dense set of sequences $\{\mcE_n(x)\}_{n=1}^{\infty}$ on the unit ball of $\mcL^1[\mathbb{B}_j^n] $ and assume $\{\mcE_n^*\}_{n=1}^{\infty} $ be any corresponding set of duality mapping in $\mcL^\infty[\B]$, also if $\B $ is $\mcL^1[\mathbb{B}_j^n]$ , using Kuelbs lemma, it is clear that the Hilbert space $\K^2[\mathbb{B}_j^n]$ will contain some non absolute integrable functions. For fix $t_k>0 $ such that $ \sum_{k=1}^{\infty} t_k=1$ and defined an inner product $(.)$ on $\mcL^1[\mathbb{B}_j^n]$ by $$(f,g)= \sum_{k=1}^{\infty}t_k[ \int_{\mathbb{B}_j^n}\mcE_k(x) f(x)d \mu_{\mathbb{B}}(x)][\int_{\mathbb{B}_j^n}\mcE_k(y)g(y) d \mu_{\mathbb{B}}(y)]^c$$ The completion of $\mcL^1[\mathbb{B}_j^n]$ in  the  inner product is the  spaces  $\K^2[\mathbb{B}_j^n].$  We can  see directly that $\K^2[\mathbb{B}_j^n] $ contains the HK-integrable functions.
   We call the completion of $\mcL^1[\mathbb{B}_j^n]$ with the above inner product, the Kuelbs-Steadman space $\K^2[\mathbb{B}_j^n].$
    \begin{thm}\label{th62}
     The space $\K^2[\mathbb{B}_j^n]$ contains $\mcL^p[\mathbb{B}_j^n]$ (for~each $p,~1\leq p <\infty$) as dense subspace.
    \end{thm}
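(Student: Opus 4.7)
The plan is to verify two statements in sequence: (i) there is a continuous linear injection $\iota_p\colon \mcL^p[\mathbb{B}_j^n]\to\K^2[\mathbb{B}_j^n]$, and (ii) this embedding has dense image.

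For (i), fix $f\in\mcL^p[\mathbb{B}_j^n]$. The inner product on $\K^2$ is built from the fixed test family $\{\mcE_k\}$, and by choosing a dense subsequence of the unit ball of $\mcL^1$ one can arrange $\mcE_k\in\mcL^1\cap\mcL^\infty$, whence $\mcE_k\in\mcL^{p'}$ for every conjugate exponent $p'\in[1,\infty]$. H\"older's inequality then gives, for each $k$,
\[
\left|\int_{\mathbb{B}_j^n}\mcE_k(x)f(x)\,d\mu_{\mathbb{B}}(x)\right|\le \|\mcE_k\|_{p'}\,\|f\|_p,
\]
so
\[
\|f\|_{\K^2}^2=\sum_{k=1}^{\infty}t_k\left|\int_{\mathbb{B}_j^n}\mcE_k f\,d\mu_{\mathbb{B}}\right|^2\le\Bigl(\sum_{k=1}^{\infty}t_k\|\mcE_k\|_{p'}^2\Bigr)\|f\|_p^2.
\]
Provided the weights $t_k$ (with $\sum t_k=1$) are chosen so that $C_p:=\sum_k t_k\|\mcE_k\|_{p'}^2<\infty$ for every $p\in[1,\infty)$, this yields $\|\iota_p(f)\|_{\K^2}\le\sqrt{C_p}\,\|f\|_p$, i.e.\ continuity. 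Injectivity follows because $\{\mcE_k\}$ is dense in the $\mcL^1$ unit ball, hence its $\K^2$-pairing separates functions in $\mcL^p$.

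For (ii), note that $\mcL^1[\mathbb{B}_j^n]$ is dense in $\K^2[\mathbb{B}_j^n]$ by construction (the latter is defined as the completion of the former). The intersection $\mcL^1\cap\mcL^p$ is dense in $\mcL^1$ in the $\mcL^1$-norm, by the standard truncation $g\mapsto g\,\chi_{\{|g|\le N\}\cap K_N}$ with $K_N$ exhausting by sets of finite measure, followed by the dominated convergence theorem. By step (i) applied at $p=1$, the $\K^2$-norm is dominated by the $\mcL^1$-norm on $\mcL^1$, so $\mcL^1\cap\mcL^p$ is also $\K^2$-dense in $\mcL^1$. Combined with the $\K^2$-density of $\mcL^1$ in $\K^2$, this forces $\mcL^1\cap\mcL^p\subset\mcL^p$ to be $\K^2$-dense, hence so is $\mcL^p$.

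The main obstacle is ensuring the constant $C_p$ is finite \emph{uniformly} across $p\in[1,\infty)$, so that a single choice of $\{\mcE_k,t_k\}$ services every $p$ at once. The natural device is to take $\mcE_k\in\mcL^1\cap\mcL^\infty$ with both $\|\mcE_k\|_1$ and $\|\mcE_k\|_\infty$ bounded by $1$, invoke the Riesz--Thorin style log-convex bound $\|\mcE_k\|_{p'}\le\|\mcE_k\|_1^{1/p'}\|\mcE_k\|_\infty^{1/p}\le 1$, and then pick $t_k=2^{-k}$. Once this normalization is in place, steps (i) and (ii) combine to deliver the dense continuous embedding $\mcL^p[\mathbb{B}_j^n]\hookrightarrow\K^2[\mathbb{B}_j^n]$ asserted in the theorem.
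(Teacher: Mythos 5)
Your proof is correct, and it follows the same two-step skeleton as the paper's: first a norm comparison $\|f\|_{\K^2}\lesssim\|f\|_p$ coming from the normalization of the test functions $\mcE_k$, then density inherited from the fact that $\K^2[\mathbb{B}_j^n]$ is by definition the completion of $\mcL^1[\mathbb{B}_j^n]$. The differences are in the implementation. For the norm bound, the paper exploits $0\le\mcE_k\le 1$ and $\|\mcE_k\|_1\le 1$ to apply a Jensen-type inequality with respect to the sub-probability measure $\mcE_k\,d\mu_{\mathbb{B}}$, getting $\bigl|\int\mcE_k f\bigr|\le\bigl(\int\mcE_k|f|^q\bigr)^{1/q}\le\|f\|_q$ directly; you instead use H\"older against the conjugate exponent together with the log-convexity bound $\|\mcE_k\|_{p'}\le\|\mcE_k\|_1^{1/p'}\|\mcE_k\|_\infty^{1/p}\le 1$. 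These are essentially equivalent and both rest on the same normalization of the $\mcE_k$ (your worry about choosing the $t_k$ to make $C_p$ finite is moot once $\|\mcE_k\|_{p'}\le1$, since then $C_p\le\sum_k t_k=1$ for every $p$ simultaneously). Where your write-up genuinely adds value is the density step: the paper simply asserts that density follows from $\mcL^1$ being dense in its completion, without explaining why $\mcL^p$ itself (rather than $\mcL^1$) is dense; your chain --- $\mcL^1\cap\mcL^p$ is $\mcL^1$-dense in $\mcL^1$ by truncation, the $\K^2$-norm is dominated by the $\mcL^1$-norm, hence $\mcL^1\cap\mcL^p\subset\mcL^p$ is $\K^2$-dense --- closes that gap explicitly. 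One small caveat: your injectivity remark (that the pairings against a dense subset of the $\mcL^1$ unit ball separate points of $\mcL^p$) is stated a bit loosely, since a general $f\in\mcL^p$ need not lie in $(\mcL^1)^*$; the clean way to see separation is that the family $\{\mcE_k\}$ can be taken to contain the indicators of a generating family of sets of finite measure, so that $\int_{B_k}f\,d\mu_{\mathbb{B}}=0$ for all $k$ forces $f=0$ a.e. This does not affect the containment-and-density claim of the theorem.
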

    \begin{proof}
     We know $\K^2[\mathbb{B}_j^n] $ contains $\mcL^1[\mathbb{B}_j^n]$ densely. Thus we need to only show $L^q[\mathbb{B}_j^n] \subset \K^2[\mathbb{B}_j^n] $ for $ q \neq 1.$   Let $ f \in L^q[\mathbb{B}_j^n]$ and $q < \infty.$\\
     Since $ |\mcE(x)|=\mcE(x) \leq 1 $ and $|\mcE(x)|^q \leq \mcE(x),$ we have 
     \begin{align*}
     ||f||_{\K^2} &= \left[\sum\limits_{n=1}^{\infty}t_k\left|\int\limits_{\mathbb{B}_j^n}\mcE_k(x)f(x)d\la_{\mathbb{B}}(x)\right|^\frac{2q}{q}\right]^\frac{1}{2}\\&\leq \left[\sum\limits_{n=1}^{\infty}t_k\left(\int\limits_{\mathbb{B}_j^n}\mcE_k(x)|f(x)|^qd\la_{\B}(x)\right)^\frac{2}{q}\right]^\frac{1}{2}\\&\leq\sup\limits_{k}\left(\int\limits_{\mathbb{B}_j^n}\mcE_k(x)|f(x)|^qd\la_{\B}(x)\right)^\frac{1}{q} \leq ||f||_q.
     \end{align*}
     Hence $f \in \K^2[\mathbb{B}_j^n].$
    \end{proof}
    We can construct  the norm of $\K^p[\mathbb{B}_j^n]$  which is  defined as  $$||f||_{\K^p[\mathbb{B}_j^n]} =\left\{\begin{array}{c}\left(\sum\limits_{k=1}^{\infty}t_k\left|\int_{\mathbb{B}_j^n}\mathcal{E}_k(x)f(x)d\mu_{\mathcal{B}}(x)\right|^p\right)^{\frac{1}{p}}, \mbox{~for~} 1\leq p<\infty;\\
  \sup\limits_{k\geq 1}\left|\int_{\mathbb{B}_j^n}\mathcal{E}_k(x)f(x)d\mu_{\mathcal{B}}(x)\right|, \mbox{~for~} p=\infty \end{array}\right.$$
   
  %  \begin{cor}
   %  $\mcL^\infty[\B] \subset \K^2[\B].$
    %\end{cor}  
  It is easy to see that $||.||_{\K^p[\mathbb{B}_j^n]} $ defines a norm on $\mcL^p[\mathbb{B}_j^n].$ If $\K^p[\mathbb{B}] $ is the completion of $\mcL^p[\B]$ with respect to this norm, we have 
  \begin{thm}
  For each $q,~1\leq q < \infty,~L^q[\mathbb{B}_j^n] \subset \K^p[\mathbb{B}_j^n]$ as dense continuous embedding.
  \end{thm}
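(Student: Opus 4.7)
The plan is to establish two separate facts: (i) that the inclusion $\mcL^q[\mathbb{B}_j^n] \hookrightarrow \K^p[\mathbb{B}_j^n]$ is continuous, i.e., $\|f\|_{\K^p} \leq C_{p,q}\|f\|_q$ for some constant; and (ii) that $\mcL^q[\mathbb{B}_j^n]$ is $\|.\|_{\K^p}$-dense in $\K^p[\mathbb{B}_j^n]$. I will model the continuity estimate on the proof of Theorem \ref{th62}, and obtain density by passing through simple functions.

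For continuity, I would exploit the normalization $|\mcE_k(x)| \leq 1$ that is already implicit in the previous proof (consequence of choosing the countable dense set on the unit ball together with the associated $\mcL^\infty$ duality mapping). For $q > 1$ with conjugate exponent $q'$, Hölder's inequality gives
\[
\left|\int_{\mathbb{B}_j^n} \mcE_k(x) f(x)\, d\mu_{\mathbb{B}}(x)\right| \leq \left(\int |\mcE_k|^{q'} d\mu_{\mathbb{B}}\right)^{1/q'} \|f\|_q \leq \left(\int |\mcE_k| d\mu_{\mathbb{B}}\right)^{1/q'}\|f\|_q \leq \|f\|_q,
\]
since $|\mcE_k|^{q'} \leq |\mcE_k|$ when $|\mcE_k| \leq 1$ and $q' \geq 1$, and $\|\mcE_k\|_1 \leq 1$. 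The cases $q = 1$ and $p = \infty$ are immediate. Raising to the $p$-th power, weighting by $t_k$, summing, and using $\sum_{k=1}^\infty t_k = 1$, yields
\[
\|f\|_{\K^p[\mathbb{B}_j^n]}^p = \sum_{k=1}^\infty t_k\left|\int \mcE_k f\, d\mu_{\mathbb{B}}\right|^p \leq \|f\|_q^p,
\]
so the embedding constant is $1$.

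For density, by construction $\K^p[\mathbb{B}_j^n]$ is the $\|.\|_{\K^p}$-completion of $\mcL^p[\mathbb{B}_j^n]$, hence $\mcL^p$ is $\|.\|_{\K^p}$-dense in $\K^p$. When $q = p$ there is nothing further to show. When $q \neq p$, let $\mcS$ denote the space of simple functions supported on sets of finite $\mu_{\mathbb{B}}$-measure; since $\mu_{\mathbb{B}}$ is $\sigma$-finite on $\mathbb{B}_j^n$, standard measure theory gives $\mcS$ dense in $\mcL^p$ in the $\|.\|_p$-norm, and the continuity established in the previous paragraph (applied with $q = p$) upgrades this to $\|.\|_{\K^p}$-density. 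Since $\mcS \subset \mcL^r$ for every $r \in [1,\infty)$, in particular $\mcS \subset \mcL^q$, we conclude that $\mcL^q$ is dense in $\K^p[\mathbb{B}_j^n]$. The main obstacle is the justification of the pointwise bound $|\mcE_k| \leq 1$: this is not logically forced by the unit-ball condition in $\mcL^1$ alone, and needs to be read off from the explicit construction of the dense set together with the $\mcL^\infty$ duality mappings chosen in the definition of the inner product above Theorem \ref{th62}; once that point is settled, everything else is a routine Hölder and completion argument.
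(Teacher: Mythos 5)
Your proposal is correct and follows the same overall skeleton as the paper's proof (reduce everything to a uniform per-term bound $\left|\int_{\mathbb{B}_j^n}\mcE_k f\,d\mu_{\mathbb{B}}\right|\le \|f\|_q$ using $|\mcE_k|\le 1$, then sum against the weights $t_k$ with $\sum_k t_k=1$), but the key estimate is obtained by a genuinely different route. The paper first applies a Jensen/power-mean step, $\left|\int \mcE_k f\,d\mu_{\mathbb{B}}\right|^q\le \int \mcE_k|f|^q\,d\mu_{\mathbb{B}}$ (valid because $\mcE_k\,d\mu_{\mathbb{B}}$ is a sub-probability measure), and then bounds $\bigl[\sum_k t_k(\cdot)^{p/q}\bigr]^{1/p}$ by $\sup_k(\cdot)^{1/q}\le\|f\|_q$; you instead apply H\"older with the conjugate exponent $q'$ directly to each term, using $\|\mcE_k\|_{q'}\le\|\mcE_k\|_1^{1/q'}\le 1$, and sum. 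Both yield embedding constant $1$, and both rest on the same unproved-in-the-paper normalization $|\mcE_k|\le 1$, which you rightly flag: it holds because in the underlying Gill--Zachary construction the $\mcE_k$ are indicator functions of a countable cover, not because of the unit-ball condition alone, and the paper simply asserts it. Your proposal also goes beyond the paper on the density claim: the paper only invokes density of $\mcL^p$ in $\K^p$ by construction and never argues that $\mcL^q$ itself is dense for $q\ne p$, whereas your simple-function argument (simple functions with finite-measure support are $\|\cdot\|_p$-dense in $\mcL^p$, hence $\|\cdot\|_{\K^p}$-dense, and lie in every $\mcL^r$) actually closes that gap. Net effect: your argument is slightly more self-contained and proves the full statement as written; the paper's is marginally shorter but leaves the density of $\mcL^q$ implicit.
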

  \begin{proof}
   We know  from previous Theorem \ref{th62}, and by the construction  $\K^p[\mathbb{B}_j^n] $ contains $\mcL^p[\mathbb{B}_j^n]$ densely. Thus we need only show $L^q[\mathbb{B}_j^n] \subset \K^p[\mathbb{B}_j^n] $ for $ q \neq p.$ 
   Let $ f \in L^q[\mathbb{B}_j^n]$ and $q < \infty.$ 
   Since $|\mcE(x)|=\mcE(x) \leq 1 $ and $|\mcE(x)|^q \leq \mcE(x),$ we have 
   \begin{align*}
   ||f||_{\K^p} &= [\sum\limits_{n=1}^{\infty}t_k|\int\limits_{\mathbb{B}_j^n}\mcE_k(x)f(x)d\la_{\B}(x)|^\frac{qp}{q}]^\frac{1}{p}\\&\leq [\sum\limits_{n=1}^{\infty}t_k(\int\limits_{\mathbb{B}_j^n}\mcE_k(x)|f(x)|^qd\la_{\B}(x))^\frac{p}{q}]^\frac{1}{p}\\&\leq\sup\limits_{k}(\int\limits_{\B}\mcE_k(x)|f(x)|^qd\la_{\B}(x))^\frac{1}{q}\\&\leq ||f||_q.
   \end{align*}
   Hence $f \in \K^p[\mathbb{B}_j^n].$
  \end{proof}
  \begin{cor}
   $\mcL^\infty[\mathbb{B}_j^n] \subset \K^p[\mathbb{B}_j^n].$
  \end{cor}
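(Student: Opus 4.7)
The plan is to reduce the corollary to a direct estimate of the $\K^p$-norm, exploiting the fact that an $\mcL^\infty$ bound on $f$ is stronger than any $\mcL^q$ bound needed in the preceding theorem. The key observation is that for each $k$ the duality element $\mcE_k$ sits in the unit ball of $\mcL^1[\mathbb{B}_j^n]$, so that the bracket $\int \mcE_k(x) f(x) d\mu_{\mathbb{B}}(x)$ is uniformly controlled by $\|f\|_\infty$. Once this uniform bound is in hand, the weighted sum defining $\|f\|_{\K^p}$ collapses because $\sum_k t_k = 1$.

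First I would fix $f \in \mcL^\infty[\mathbb{B}_j^n]$ and estimate each inner bracket by pulling $\|f\|_\infty$ out of the integral:
\[
\left| \int_{\mathbb{B}_j^n} \mcE_k(x) f(x) \, d\mu_{\mathbb{B}}(x) \right| \le \|f\|_\infty \int_{\mathbb{B}_j^n} |\mcE_k(x)| \, d\mu_{\mathbb{B}}(x) \le \|f\|_\infty,
\]
since $\|\mcE_k\|_{\mcL^1} \le 1$ by construction. Then for $1 \le p < \infty$, I would plug this into the defining formula to obtain
\[
\|f\|_{\K^p[\mathbb{B}_j^n]} \le \|f\|_\infty \left( \sum_{k=1}^\infty t_k \right)^{1/p} = \|f\|_\infty,
\]
and for $p = \infty$, taking the supremum over $k$ in place of the sum yields the identical bound $\|f\|_{\K^\infty[\mathbb{B}_j^n]} \le \|f\|_\infty$. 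In either case $f$ has finite $\K^p$-norm and therefore lies in $\K^p[\mathbb{B}_j^n]$.

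No genuine obstacle is anticipated; the proof is shorter than the $\mcL^q$ case precisely because $\mcL^\infty$ boundedness allows the factor $|f(x)|$ to be pulled out before integrating against $\mcE_k$, avoiding the use of the auxiliary pointwise inequality $|\mcE_k(x)|^q \le \mcE_k(x)$ that was needed previously. The only delicate point worth double-checking is that the $\mcE_k$ do belong to the unit $\mcL^1$-ball (as asserted in the construction at the start of the section), since this is what makes the bound $\int |\mcE_k| d\mu_{\mathbb{B}} \le 1$ available even when $\mu_{\mathbb{B}}(\mathbb{B}_j^n)$ is not finite.
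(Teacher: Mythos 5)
Your proof is correct and is essentially the intended argument: the paper states this corollary without proof as the $q=\infty$ endpoint of the preceding embedding theorem, and your direct estimate $\left|\int_{\mathbb{B}_j^n}\mcE_k(x)f(x)\,d\mu_{\mathbb{B}}(x)\right|\le\|f\|_\infty\|\mcE_k\|_{\mcL^1}\le\|f\|_\infty$, combined with $\sum_k t_k=1$, is exactly the natural way to supply it; you are also right that the only delicate point is that the bound comes from the $\mcE_k$ lying in the unit ball of $\mcL^1[\mathbb{B}_j^n]$ rather than from finiteness of $\mu_{\mathbb{B}}(\mathbb{B}_j^n)$, which is what the paper's construction provides. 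The one step you share with the paper's own treatment of the $\mcL^q$ case, and which is strictly speaking incomplete in both, is the final inference ``finite $\K^p$-norm implies membership in $\K^p$'': since $\K^p[\mathbb{B}_j^n]$ is defined as a completion of $\mcL^p[\mathbb{B}_j^n]$, one should also check that $f$ is approximable in the $\K^p$-norm by $\mcL^p$ functions (e.g.\ by truncations $f\chi_{B_m}$ over sets of finite measure, using dominated convergence against each $\mcE_k\in\mcL^1$), but this is a gap inherited from the paper rather than one you introduced.
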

  \begin{thm}
  $\C_c[\mathbb{B}_j^n] $ is dense in $\K^2[\mathbb{B}_j^n].$
  \end{thm}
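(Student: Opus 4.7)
The plan is to use transitivity of density via two steps: first establish that $\mcL^1[\mathbb{B}_j^n]$ continuously embeds into $\K^2[\mathbb{B}_j^n]$ with $\|\cdot\|_{\K^2} \leq \|\cdot\|_1$, and then invoke the classical density of $\mcC_c$ in $\mcL^1$ on $\mathbb{B}_j^n$, together with the already established density of $\mcL^1[\mathbb{B}_j^n]$ in $\K^2[\mathbb{B}_j^n]$ (which is built into the definition of $\K^2[\mathbb{B}_j^n]$ as the completion of $\mcL^1[\mathbb{B}_j^n]$ in the inner product norm).

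For the continuity of the embedding, I would start with $f\in\mcL^1[\mathbb{B}_j^n]$ and use that $|\mcE_k(x)|\leq 1$ together with $\sum_k t_k=1$ to bound
\[
\|f\|_{\K^2}^2=\sum_{k=1}^\infty t_k\left|\int_{\mathbb{B}_j^n}\mcE_k(x)f(x)\,d\mu_{\mathbb{B}}(x)\right|^2\leq \sum_{k=1}^\infty t_k\|f\|_1^2=\|f\|_1^2.
\]
Thus any $\mcL^1$-approximation automatically yields a $\K^2$-approximation. This is the step that lets me transport the known density result from $\mcL^1$ into $\K^2$.

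Next, I would show that given $f\in\mcL^1[\mathbb{B}_j^n]$ and $\varepsilon>0$, there exists $g\in\mcC_c[\mathbb{B}_j^n]$ with $\|f-g\|_1<\varepsilon$. Since $\mathbb{B}_j^n=\{\mcQ_n x:x\in\B\}\times j^n$ is finite-dimensional and $\mu_{\mathbb{B}}$ restricted to $B[\mathbb{B}_j^n]$ is equivalent to $n$-dimensional Lebesgue measure, the classical result that continuous compactly supported functions are dense in $\mcL^1$ on $\mathbb{R}^n$ (via truncation, convolution with a mollifier, and use of Lusin's theorem) transfers verbatim. I would either state this as the standard real-analysis fact or briefly indicate the mollification argument on the coordinate chart.

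Finally, combining the two steps: given $h\in\K^2[\mathbb{B}_j^n]$ and $\varepsilon>0$, choose $f\in\mcL^1[\mathbb{B}_j^n]$ with $\|h-f\|_{\K^2}<\varepsilon/2$ (by density of $\mcL^1$), and then choose $g\in\mcC_c[\mathbb{B}_j^n]$ with $\|f-g\|_1<\varepsilon/2$, which by the embedding gives $\|f-g\|_{\K^2}<\varepsilon/2$. Then $\|h-g\|_{\K^2}<\varepsilon$. The only real obstacle is justifying the $\mcC_c$-density inside $\mcL^1[\mathbb{B}_j^n]$; the rest is routine since the embedding inequality $\|\cdot\|_{\K^2}\leq \|\cdot\|_1$ is immediate from the definition of the inner product and the bounds $|\mcE_k|\leq 1$, $\sum t_k=1$.
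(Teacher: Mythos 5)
Your proposal is correct and follows essentially the same route as the paper: density of $\mathcal{C}_c[\mathbb{B}_j^n]$ in $\mcL^1[\mathbb{B}_j^n]$ combined with density of $\mcL^1[\mathbb{B}_j^n]$ in $\K^2[\mathbb{B}_j^n]$. Your version is actually more careful than the paper's, since you make explicit the inequality $\|\cdot\|_{\K^2}\leq\|\cdot\|_1$ that justifies transporting $\mcL^1$-approximation into $\K^2$-approximation, a step the paper leaves implicit.
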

  \begin{proof}
   As $\C_c[\mathbb{B}_j^n] $ is dense in $\mcL^p[\mathbb{B}_j^n]$    and $\mcL^p[\mathbb{B}_j^n] $ densely contained in $\K^2[\mathbb{B}_j^n].$ 
   Hence the proof.
  \end{proof}
  \begin{rem}
As H\"{o}lder and generalized H\"{o}lder inequalities for $\mcL^p[\mathbb{B}_j^n]$ is hold for $ 1 \leq p < \infty$ (see page no 83 of \cite{TG}). 
If $\K^p[\mathbb{B}_j^n] $ is completion of $\mcL^p[\mathbb{B}_j^n],$ so the H\"{o}lder and generalized H\"{o}lder  inequalities hold in $\K^p[\mathbb{B}_j^n] $ for $ 1\leq p < \infty.$
\end{rem}
  \begin{thm}
   (The Minkowski Inequality) Let $1 \leq p < \infty$ and $f, g \in \K^p[\mathbb{B}_j^n].$ Then $f +g \in \K^p[\mathbb{B}_j^n] $ and $$||f + g||_{\K^p[\mathbb{B}_j^n]} \leq ||f||_{\K^p[\mathbb{B}_j^n]} +||g||_{\K^p[\mathbb{B}_j^n]}.$$
  \end{thm}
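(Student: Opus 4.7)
The plan is to reduce the Minkowski inequality on $\K^p[\mathbb{B}_j^n]$ to the classical Minkowski inequality on the weighted sequence space $\ell^p$. First I would use linearity of the integral to observe that for every index $k$,
$$\int_{\mathbb{B}_j^n}\mathcal{E}_k(x)(f+g)(x)\,d\mu_{\mathbb{B}}(x)=A_k+B_k,\quad\text{where } A_k=\int_{\mathbb{B}_j^n}\mathcal{E}_k(x)f(x)\,d\mu_{\mathbb{B}}(x),\ B_k=\int_{\mathbb{B}_j^n}\mathcal{E}_k(x)g(x)\,d\mu_{\mathbb{B}}(x).$$
Thus, if I set $\alpha_k=t_k^{1/p}A_k$ and $\beta_k=t_k^{1/p}B_k$, the desired inequality is exactly
$$\Bigl(\sum_{k=1}^{\infty}|\alpha_k+\beta_k|^p\Bigr)^{1/p}\leq\Bigl(\sum_{k=1}^{\infty}|\alpha_k|^p\Bigr)^{1/p}+\Bigl(\sum_{k=1}^{\infty}|\beta_k|^p\Bigr)^{1/p},$$
which is the standard Minkowski inequality in $\ell^p$. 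The case $p=1$ follows at once from the triangle inequality for scalars, applied termwise before summing against the weights $t_k$.

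For $1<p<\infty$, I would simply invoke the classical Minkowski inequality in $\ell^p$ applied to the sequences $(\alpha_k)$ and $(\beta_k)$. Since $f,g\in\K^p[\mathbb{B}_j^n]$ means $(\alpha_k),(\beta_k)\in\ell^p$, both right-hand side terms are finite, so the left-hand side is finite; this simultaneously verifies $f+g\in\K^p[\mathbb{B}_j^n]$ and yields the stated bound on the norm.

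One point needing brief attention is that a general $f\in\K^p[\mathbb{B}_j^n]$ lies in the completion of $\mcL^p[\mathbb{B}_j^n]$ under $\|\cdot\|_{\K^p}$, so the integrals $A_k$ must be interpreted through the Cauchy sequence defining $f$. I would argue this by first establishing the inequality for $f,g\in\mcL^p[\mathbb{B}_j^n]$ (where the integrals are literal Lebesgue integrals and the reduction above is immediate), and then extending to the completion by continuity of the $\K^p$-norm together with the fact that the coordinate functionals $\Phi_k(f):=\int_{\mathbb{B}_j^n}\mathcal{E}_k(x)f(x)\,d\mu_{\mathbb{B}}(x)$ are continuous with respect to $\|\cdot\|_{\K^p}$ (indeed $t_k^{1/p}|\Phi_k(f)|\leq\|f\|_{\K^p}$). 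No serious obstacle is expected; the whole content is the linearity of $\Phi_k$ together with Minkowski in $\ell^p$.
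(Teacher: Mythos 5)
Your argument is correct and matches the substance of the paper's proof, which consists of a single citation to Lemma~2 of Maligranda's note on the H\"older and Minkowski inequalities: in both cases the theorem reduces to the classical Minkowski inequality in $\ell^p$ applied to the weighted coefficient sequences $\bigl(t_k^{1/p}\int_{\mathbb{B}_j^n}\mathcal{E}_k(x)f(x)\,d\mu_{\mathbb{B}}(x)\bigr)_k$. Your write-up is in fact more complete than the paper's, since you also record the linearity and $\K^p$-continuity of the coordinate functionals and the passage from $\mcL^p[\mathbb{B}_j^n]$ to its completion, details the paper leaves entirely implicit.
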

  \begin{proof}
   The proof follows from the Lemma 2 of \cite{LM}.
  \end{proof}
  \begin{thm}
  For $1\leq p \leq \infty,$ we have 
  \begin{enumerate}
  \item If $f_n \to f $ weakly in $\mcL^p[\mathbb{B}_j^n],$ then $f_n \to f$ strongly in $\K^p[\mathbb{B}_j^n].$
  \item If $ 1 < p < \infty,$ then $\K^p[\mathbb{B}_j^n]$ is uniformly convex.
  \item If $ 1 <p < \infty$ and $\frac{1}{p} + \frac{1}{q}=1,$ then the dual space of $\K^p[\mathbb{B}_j^n] $ is $\K^q[\mathbb{B}_j^n].$
  \item $\K^\infty[\mathbb{B}_j^n] \subset \K^p[\mathbb{B}_j^n],$ for $ 1\leq p < \infty.$
  \end{enumerate}
  \end{thm}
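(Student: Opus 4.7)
The plan is to exploit the fact that the map $T_p : \K^p[\mathbb{B}_j^n] \to \ell^p$ defined by $(T_p f)_k = t_k^{1/p}\int_{\mathbb{B}_j^n}\mcE_k(x)f(x)\,d\mu_{\mathbb{B}}(x)$ is a linear isometry onto a closed subspace $M_p$ of $\ell^p$ (closed because $\K^p$ is complete). Most of the four parts then reduce to standard facts about $\ell^p$.

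\textbf{Part (4).} Since $\mcE_k$ sits in the closed unit ball of $\mcL^\infty$, $|\int \mcE_k f| \le \|f\|_{\K^\infty}$ for every $k$, whence
$$\|f\|_{\K^p}^p = \sum_k t_k\bigl|\int \mcE_k f\bigr|^p \le \|f\|_{\K^\infty}^p \sum_k t_k = \|f\|_{\K^\infty}^p,$$
using $\sum_k t_k = 1$, giving the continuous inclusion $\K^\infty \subset \K^p$.

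\textbf{Part (1).} If $f_n \to f$ weakly in $\mcL^p$, each $\mcE_k \in \mcL^\infty \subset (\mcL^p)^*$, so $a_k^{(n)} := \int \mcE_k(f_n-f) \to 0$ for every fixed $k$. The Banach-Steinhaus theorem gives $\sup_n\|f_n\|_p < \infty$, hence $|a_k^{(n)}| \le M$ for some fixed $M$. Dominated convergence for series, with dominating sequence $M^p t_k$, then yields $\|f_n - f\|_{\K^p}^p = \sum_k t_k |a_k^{(n)}|^p \to 0$.

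\textbf{Part (2).} Clarkson's inequalities give uniform convexity of $\ell^p$ for $1 < p < \infty$; since uniform convexity is inherited by closed subspaces, $\K^p \cong M_p$ is uniformly convex.

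\textbf{Part (3).} Milman-Pettis gives that $\K^p$ is reflexive. Introduce the pairing
$$\langle f, g\rangle = \sum_k t_k \bigl(\int \mcE_k f\bigr)\overline{\bigl(\int \mcE_k g\bigr)},$$
which by Hölder on $\ell^p$-$\ell^q$ satisfies $|\langle f,g\rangle| \le \|f\|_{\K^p}\|g\|_{\K^q}$ and so defines a bounded map $J:\K^q \to (\K^p)^*$. The main obstacle is showing $J$ is an isometric isomorphism. For the isometry, given $g \in \K^q$ one must produce, on the $\ell^p$ side, a sequence extremizing Hölder against $T_q g$ that actually lies in the subspace $M_p$; this uses the fact that the weights $t_k^{1/p}$ and $t_k^{1/q}$ are conjugate so that the $\ell^p$-$\ell^q$ pairing of $T_p f$ with $T_q g$ coincides with $\langle f,g\rangle$. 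For surjectivity, an arbitrary $\varphi \in (\K^p)^*$ extends by Hahn-Banach to a functional on $\ell^p$ represented by some $c \in \ell^q$; the task is to show that the coset $c + M_p^\perp$ contains an element of $M_q$, i.e.\ of the form $T_q g$. Combining the density of $\mcL^q$ in $\K^q$ with reflexivity of $\K^p$ (so that a candidate $g$ annihilated against by $J(\K^q)$ forces the corresponding element of $(\K^p)^{**} = \K^p$ to vanish) should close the argument; this last step is where the technical work is concentrated.
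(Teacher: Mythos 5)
Your parts (1), (2) and (4) are correct, and in fact more careful than the paper's own arguments. For (4) the computation $\sum_k t_k|\int\mcE_k f|^p\le\|f\|_{\K^\infty}^p\sum_k t_k=\|f\|_{\K^\infty}^p$ is exactly what is needed (the paper's displayed inequality is garbled at this point). For (1) the paper stops after observing that $\int\mcE_k(f_n-f)\,d\mu_{\mathbb{B}}\to0$ for each $k$; your additional steps --- uniform boundedness of $\|f_n\|_p$ via Banach--Steinhaus, the uniform bound $|a_k^{(n)}|\le M$ (which uses $\|\mcE_k\|_q\le1$, valid since $0\le\mcE_k\le1$ and $\|\mcE_k\|_1\le1$; note $\mcL^\infty\not\subset(\mcL^p)^*$ here since $\mu_{\mathbb{B}}$ is not finite, so you should invoke $\mcE_k\in\mcL^1\cap\mcL^\infty$), and dominated convergence against $M^pt_k$ --- are the missing justification. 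Note that this argument, like the paper's, does not cover $p=\infty$, where pointwise convergence in $k$ of a uniformly bounded sequence does not control the supremum. For (2) your route is genuinely different from, and better than, the paper's: the paper tries to transfer uniform convexity of $(\mcL^p,\|\cdot\|_p)$ to $\K^q$ through a density argument, which is a non sequitur since uniform convexity is a property of the norm, not of a dense subset; your isometric embedding $f\mapsto(t_k^{1/p}\int\mcE_k f)_k$ onto a closed subspace of $\ell^p$ plus Clarkson is the standard correct proof.

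Part (3), however, is a genuine gap: you set up the pairing and the contraction $J:\K^q\to(\K^p)^*$, but you explicitly leave open precisely the two claims that constitute the theorem, namely that $J$ is isometric and onto. In your $\ell^p$ picture, $(\K^p)^*\cong\ell^q/M_p^\perp$, so the assertion amounts to $\ell^q=M_q+M_p^\perp$ with $M_q\cap M_p^\perp=\{0\}$ and with the quotient norm of $T_qg+M_p^\perp$ equal to $\|T_qg\|_{\ell^q}$; none of this follows from reflexivity or from density of $\mcL^q$ in $\K^q$ alone, and your sketch of how "to close the argument" does not identify a mechanism that would produce a representative of an arbitrary coset inside $M_q$. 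To be fair, you have located the difficulty exactly where it lies: the paper's own proof of (3) is circular, since it invokes $\{\K^p[\mathbb{B}_j^n]\}^*=\K^q[\mathbb{B}_j^n]$ --- the very statement to be proved --- as an intermediate step before passing to unions. So while your framework is the right one to attempt a real proof, as written part (3) remains unproved.
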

  \begin{proof}
  (1) If $ \{f_n\} $ is weakly convergence sequence in $ \mcL^p[\mathbb{B}_j^n] $ with limit $ f.$ Then $ \int_{\mathbb{B}_j^n}\mathcal{E}_k(x)[f_n(x)-f(x)]d\mu_{\la_\B}(x) \to 0 $ for each $k.$\\
 For each $ f_n \in \K^p[\mathbb{B}_j^n] $ for all n $$ \lim\limits_{n \to \infty}\int_{\mathbb{B}_j^n} \mathcal{E}_k(x)[f_n(x)-f(x)]d\mu_{\B}(x) \to 0.$$
 So, $\{f_n\} $ is converges strongly in $\K^p[\mathbb{B}_j^n]$.\\
 (2) 
 We know $\mcL^p[\mathbb{B}_j^n] $ is uniformly convex  and that is dense and compactly embedded in $\K^q[\mathbb{B}_j^n] $ for all $q,~~1\leq q \leq \infty. $\\
 So, $\bigcup\limits_{n=1}^{\infty}\mcL^p[\mathbb{B}_j^n]$ is uniformly convex for each $n $ and that is dense and compactly embedded in $\bigcup\limits_{n=1}^{\infty}\K^q[\mathbb{B}_j^n] $ for all $q,~1\leq q \leq \infty $. However $\mcL^p[\widehat{\mathbb{B}_j^n}]=\bigcup\limits_{n=1}^{\infty}\mcL^p[\mathbb{B}_j^n].$ 
 That is  $\mcL^p[\widehat{\mathbb{B}_j^n}] $ is uniformly convex, dense and compactly embedded in $\K^q[\widehat{\mathbb{B}_j^n}] $ for all $q,~~1\leq q \leq \infty $.\\
  as $\K^q[\mathbb{B}_j^n] $ is the closure of $\K^q[\widehat{\mathbb{B}_j^n}].$ 
  Therefore $\K^q[\mathbb{B}_j^n]$ is uniformly convex.\\
   $(3)$ We have from $(2)$, that $\K^p[\mathbb{B}_j^n]$ is reflexive for $ 1 < p < \infty.$ Since$$ \{\K^p[\mathbb{B}_j^n]\}^*= \K^q[\mathbb{B}_j^n],~\frac{1}{p} + \frac{1}{q}=1,~\forall n $$ and $$\K^p[\mathbb{B}_j^n] \subset \K^p[\mathbb{B}_J^{n+1}],~\forall n \implies \bigcup_{n=1}^{\infty}\{\K^p[\mathbb{B}_j^n]\}^* = \bigcup_{n=1}^{\infty}\K^q[\mathbb{B}_j^n],~\frac{1}{p}+\frac{1}{q}=1.$$ Since each $ f \in \K^p[\mathbb{B}_j^n] $ is the limit of a sequence $\{f_n\} \subset \K^p[\widehat{\mathbb{B}_j^n}]= \bigcup\limits_{n=1}^{\infty}\K^p[\mathbb{B}_j^n],$ we see that $\{\K^p[\mathbb{B}_j^n]\}^*= \K^q[\mathbb{B}_j^n], $ for $\frac{1}{p} + \frac{1}{q}=1.$\\
     (4) Let $ f \in \K^\infty[\mathbb{B}_j^n].$       This implies $|\int_{\mathbb{B}_j^n} \mcE_k(x)f(x) d\mu_{\B}(x)| $ is uniformly bounded for all $ k $. It follows that $|\int_{\mathbb{B}_j^n} \mcE_k(x)f(x) d\mu_{\B}(x)|^p $ is uniformly bounded for all $1\leq p < \infty.$      It is clear from the definition of $\K^p[\mathbb{B}_j^n] $     that \begin{align*}
     \left[\Sigma\left|\int_{\mathbb{B}_j^n} \mathcal{E}_k(x) f(x)d\mu_{\B}(x)\right|^p\right]^\frac{1}{p} &\leq M||f||_{\K^p[\mathbb{B}_j^n]}< \infty. 
\end{align*}   
So, $ f \in \K^p[\mathbb{B}_j^n].$  This completes the result.
  \end{proof}
   \begin{thm}$ C_{c}^{\infty}[\mathbb{B}_j^n] $ is a dense subset of  $ \K^2[\mathbb{B}_j^n].$
      \end{thm}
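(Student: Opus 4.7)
The plan is to obtain density by composing three successive density results, the first two of which are already available in the paper. First, by the preceding theorem and the construction of $\K^2[\mathbb{B}_j^n]$ as the completion of $\mcL^1[\mathbb{B}_j^n]$ in the stated inner product, every $\mcL^p[\mathbb{B}_j^n]$ space ($1\le p<\infty$) is continuously and densely embedded in $\K^2[\mathbb{B}_j^n]$; in particular, $\mcL^2[\mathbb{B}_j^n]$ sits densely inside $\K^2[\mathbb{B}_j^n]$. Second, the theorem just proved shows that $C_c[\mathbb{B}_j^n]$ is dense in $\K^2[\mathbb{B}_j^n]$. Hence it suffices to prove that $C_c^\infty[\mathbb{B}_j^n]$ is dense in $C_c[\mathbb{B}_j^n]$ with respect to a topology that embeds continuously into the $\K^2$-norm (for instance, uniform convergence of compactly supported functions whose supports lie inside a fixed $\mathbb{B}_j^k$, which dominates the $\K^2$-norm by the bound $|(\cdot,\mcE_k)|\le \|\mcE_k\|_\infty \|f\|_1$ used throughout the paper).

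To carry out this remaining step I would exploit the finite-dimensional structure of elements of $C_c[\mathbb{B}_j^n]$. By the construction of $\mathbb{B}_j^n=\{\mcQ_n x:x\in\B\}\times j^n$ and the $e_n$-tame framework, every $f\in C_c[\mathbb{B}_j^n]$ decomposes as $f(x)=f^n(\overline{x})\otimes h_n(\widehat{x})$, where $f^n\in C_c(\R^n)$ and $h_n=\chi_{J_n}$. I would then mollify only in the $\R^n$-variable: take a standard Friedrichs mollifier $\rho_\varepsilon\in C_c^\infty(\R^n)$ with $\int\rho_\varepsilon=1$, and set
\[
 f_\varepsilon(x) \;=\; (f^n*\rho_\varepsilon)(\overline{x})\otimes h_n(\widehat{x}).
\]
Then $f_\varepsilon\in C_c^\infty[\mathbb{B}_j^n]$, the supports stay inside a common compact set for all small $\varepsilon$, and $f^n*\rho_\varepsilon\to f^n$ uniformly on $\R^n$ by the classical mollification theorem. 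Combined with the obvious bound
\[
 \|f_\varepsilon - f\|_{\K^2[\mathbb{B}_j^n]} \;\le\; C\,\|f^n*\rho_\varepsilon - f^n\|_{\infty},
\]
which follows from $|\mcE_k|\le 1$ and $\sum t_k=1$ together with the compact support, this yields $f_\varepsilon\to f$ in $\K^2[\mathbb{B}_j^n]$.

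Concatenating the three density statements gives $C_c^\infty[\mathbb{B}_j^n]$ dense in $\K^2[\mathbb{B}_j^n]$. The one point that deserves the most care, and which I view as the main obstacle, is the passage from mollification on $\R^n$ back to $\mathbb{B}_j^n$: one must make sure that the mollified function is still $e_n$-tame (so it really lies in $C_c^\infty[\mathbb{B}_j^n]$) and that the $\K^2$-norm genuinely dominates an appropriate uniform or $\mcL^1$ norm on compactly supported functions so the limit transfers. Both of these follow from the construction in Section~1, but they are the substantive content of the argument; the rest is transitivity of density.
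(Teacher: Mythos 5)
Your argument is correct and follows essentially the same route as the paper: the paper simply cites the classical density of $C_c^{\infty}[\mathbb{B}_j^n]$ in $\mcL^p[\mathbb{B}_j^n]$ and composes it with the previously established dense embedding $\mcL^p[\mathbb{B}_j^n]\subset \K^2[\mathbb{B}_j^n]$. You route instead through $C_c[\mathbb{B}_j^n]$ and make the classical step explicit with a Friedrichs mollifier on the finite-dimensional factor together with the norm domination $\|\cdot\|_{\K^2}\le C\|\cdot\|_{\infty}$ on fixed compact supports; this is more detail than the paper supplies, but not a different idea.
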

   \begin{proof}
    As $ C_{c}^{\infty}[\mathbb{B}_j^n] $ is dense in $ \mcL^p[\mathbb{B}_j^n], \forall p . $  Moreover $\mcL^p[\mathbb{B}_j^n]$ contained as dense subset of  $ \K^2[\mathbb{B}_j^n].$  So, $ C_{c}^{\infty}[\mathbb{B}_j^n] $ is a dense subset of  $ \K^2[\mathbb{B}_j^n].$
       \end{proof}
   \begin{cor}
   $C_{0}^{\infty}[\mathbb{B}_j^n] \subset \K^p[\mathbb{B}_j^n]$ as dense.
   \end{cor}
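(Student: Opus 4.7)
The plan is to prove the corollary in two steps: first the inclusion $C_0^{\infty}[\mathbb{B}_j^n] \subset \K^p[\mathbb{B}_j^n]$, and then the density.

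For the inclusion, I would argue that any $f \in C_0^\infty[\mathbb{B}_j^n]$ is bounded on $\mathbb{B}_j^n$ (it vanishes at infinity, so it is bounded and belongs to $\mcL^\infty[\mathbb{B}_j^n]$). The preceding corollary $\mcL^\infty[\mathbb{B}_j^n] \subset \K^p[\mathbb{B}_j^n]$ (and equivalently the observation that for such $f$ each $\left|\int \mcE_k(x) f(x)\, d\mu_{\mathbb{B}}(x)\right| \le \|f\|_\infty$, so the $\K^p$-norm is controlled by $\|f\|_\infty \bigl(\sum_k t_k\bigr)^{1/p} = \|f\|_\infty$) then gives $f \in \K^p[\mathbb{B}_j^n]$.

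For density, the strategy is a chain of dense inclusions. The preceding theorem established that $C_c^{\infty}[\mathbb{B}_j^n]$ is dense in $\K^2[\mathbb{B}_j^n]$, and the same argument works for any $p$: namely, $C_c^\infty[\mathbb{B}_j^n]$ is dense in $\mcL^p[\mathbb{B}_j^n]$ by the classical density result (transferred to $\mathbb{B}_j^n$ via the measure-theoretic setup in Section~1), while $\mcL^p[\mathbb{B}_j^n]$ is dense in $\K^p[\mathbb{B}_j^n]$ by the very construction of $\K^p$ as the completion of $\mcL^p$ with respect to the $\K^p$-norm. Combining these two density statements yields density of $C_c^{\infty}[\mathbb{B}_j^n]$ in $\K^p[\mathbb{B}_j^n]$, and since $C_c^{\infty}[\mathbb{B}_j^n] \subset C_0^{\infty}[\mathbb{B}_j^n]$, the larger space is a fortiori dense.

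I do not expect a genuine obstacle here; this is essentially a packaging of facts already established. The only point that requires a moment of care is the continuity of the embedding implicit in the first step — one must check that convergence in $\mcL^p$-norm (or $\mcL^\infty$-norm) implies convergence in $\K^p$-norm, which is immediate from the bound $\|f\|_{\K^p[\mathbb{B}_j^n]} \le \|f\|_p$ appearing in the proof of the preceding embedding theorem (with the $p = \infty$ case handled by the trivial estimate noted above). With this in hand, the density transfers through the chain $C_c^{\infty} \hookrightarrow \mcL^p \hookrightarrow \K^p$ without difficulty.
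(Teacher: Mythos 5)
Your proposal is correct and follows essentially the same route the paper intends: the corollary is meant to follow from the preceding theorem's chain of dense embeddings $C_c^{\infty}[\mathbb{B}_j^n]\hookrightarrow \mcL^p[\mathbb{B}_j^n]\hookrightarrow \K^p[\mathbb{B}_j^n]$, generalized from $\K^2$ to $\K^p$. Your added care about the inclusion $C_0^{\infty}\subset\mcL^{\infty}\subset\K^p$ and the continuity of the embedding (via $\|f\|_{\K^p}\le\|f\|_p$) only makes explicit what the paper leaves unsaid.
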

    
   \begin{rem}
    Since $\mcL^1[\mathbb{B}_j^n] \subset \K^p[\mathbb{B}_j^n] $ and $\K^p[\mathbb{B}_j^n]$ is reflexive for $ 1<p< \infty $. We see the second dual $ \{\mcL^1[\mathbb{B}_j^n]\}^{**} = \mfM[\mathbb{B}_j^n] \subset \K^p[\mathbb{B}_j^n], $ where $\mfM[\mathbb{B}_j^n] $ is the space of bounded finitely additive set functions defined on the Borel sets $\mfB[\mathbb{B}_j^n].$ 
   \end{rem}
   \subsection{The family of $\K^p[\mathbb{B}_J^\infty]$:}
    We can now construct the spaces $\K^p[\mathbb{B}_J^\infty],~1 \leq p \leq \infty,$ using the same approach that led to $\mcL^1[\mathbb{B}_J^\infty]. $ Since $\K^p[\mathbb{B}_J^\infty] \subset \K^p[\mathbb{B}_J^\infty],$ We define $\K^p[\widehat{\B}_j^\infty] = \bigcup_{n=1}^{\infty}\K^p[\mathbb{B}_j^n].$ 
   \begin{Def}
       We say that a measurable function $ f \in \K^p[\B_{j}^{\infty}],$ for $ 1 \leq p \leq \infty,$ if there is a Cauchy sequence $\{f_n\} \subset \K^p[\widehat{\B}_{j}^{\infty}] $ with $ f_n \in \K^p[\B_{j}^{n}] $ and $\lim\limits_{n \to \infty}f_n(x) = f(x)~ \mu_{\B}$-a.e.
       \end{Def}
       The functions in $\K^p[\widehat{\B}_{j}^{\infty}] $ differ from functions in its closure $\K^p[\mathbb{B}_J^\infty],$ by sets of measure zero.
       \begin{thm}
       $\K^p[\widehat{\B}_j^\infty]= \K^p[\mathbb{B}_J^\infty] $
       \end{thm}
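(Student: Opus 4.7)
The plan is to mirror the strategy used in the proof of Theorem 1.12 ($\mcL^1[\mathbb{B}_j^\infty] = \mcL^1[\widehat{\R}_J^\infty]$), namely, to show that the union $\K^p[\widehat{\B}_j^\infty] = \bigcup_{n=1}^\infty \K^p[\mathbb{B}_j^n]$ is already closed with respect to $\mu_{\mathbb{B}}$-a.e.\ limits of Cauchy sequences, so that taking closure in the sense of Definition~2.8 produces no new functions.

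First I would dispose of the trivial inclusion $\K^p[\widehat{\B}_j^\infty]\subset \K^p[\mathbb{B}_J^\infty]$ by taking constant sequences: any $f\in \K^p[\mathbb{B}_j^n]$ is trivially a limit of the Cauchy sequence $\{f,f,\ldots\}\subset \K^p[\widehat{\B}_j^\infty]$, so it lies in $\K^p[\mathbb{B}_J^\infty]$. The substantive direction is $\K^p[\mathbb{B}_J^\infty]\subset \K^p[\widehat{\B}_j^\infty]$. Pick any $f\in \K^p[\mathbb{B}_J^\infty]$; if $f=0$ there is nothing to prove, so assume $f\neq 0$. By Definition~2.8 there is a Cauchy sequence $\{f_m\}\subset \K^p[\widehat{\B}_j^\infty]$ with $f_m\in \K^p[\mathbb{B}_j^{k_m}]$ and $f_m\to f$ $\mu_{\mathbb{B}}$-a.e. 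Let $a_f=\{x:f(x)\neq 0\}$ be the essential support of $f$.

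The main step is to invoke the concentration principle of Remark~1.10: any set of nonzero $\mu_{\mathbb{B}}$-measure is concentrated in $X_1 = \widehat{\R}_J^\infty$, and consequently the essential support of an a.e.\ limit of $e_n$-tame functions is contained in $\R_J^N$ for some $N$. Transferring this to the Banach-space setting via the $S$-basis projections $\mathcal{P}_n$ and $\mathcal{Q}_n$ (which define $\mathbb{B}_j^n$ and provide $\mathbb{B}_J^\infty=\bigcup \mathbb{B}_j^n$), the same reasoning forces $a_f\subseteq \mathbb{B}_j^N$ for some $N$. The key obstacle here is making this concentration argument rigorous in the Banach-space context: one has to verify that the $\mu_{\mathbb{B}}$-measure on $\B[\mathbb{B}_J^\infty]$ still enjoys the property that nontrivial mass lies in the $\widehat{\ }$-limit of finite-dimensional pieces, and that the Cauchy-sequence construction of $\K^p[\mathbb{B}_j^n]$ (via the functionals $\mcE_k$) does not spoil this support localization.

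Once the essential support is trapped in $\mathbb{B}_j^N$, I would conclude by producing a function $g\in \K^p[\mathbb{B}_j^{N+1}]$ with $\mu_{\mathbb{B}}(\{x: f(x)\neq g(x)\})=0$, namely the obvious candidate obtained by restricting (a representative of) $f$ to $\mathbb{B}_j^{N+1}$ and extending by zero. Since $\K^p[\mathbb{B}_j^{N+1}]$ is a set of equivalence classes modulo $\mu_{\mathbb{B}}$-null sets, $f$ and $g$ represent the same class, so $f\in \K^p[\mathbb{B}_j^{N+1}]\subseteq \K^p[\widehat{\B}_j^\infty]$. This gives the reverse inclusion and completes the equality $\K^p[\widehat{\B}_j^\infty]=\K^p[\mathbb{B}_J^\infty]$. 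The argument is essentially a transcription of the $p=1$ case in Theorem~1.12, with the only genuine work being verification that the concentration of supports survives the passage from $\mcL^p$ to $\K^p$ (which should be immediate, since the $\K^p$-norm is built from integrals of $\mcE_k f$ and thus vanishes on the complement of the essential support).
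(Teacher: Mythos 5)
The paper states this theorem without any proof, so there is nothing to compare against directly; the evidently intended argument is exactly the one you give, namely a transcription of the paper's proof of Theorem~1.12 (the $\mcL^1[\mathbb{B}_j^\infty]=\mcL^1[\widehat{\R}_J^\infty]$ case): one inclusion by constant Cauchy sequences, the other by showing the essential support of an a.e.\ limit is concentrated in a finite-dimensional layer, so that the limit already agrees a.e.\ with an element of some $\K^p[\mathbb{B}_j^{N+1}]$. Your proposal matches this approach and is as complete as the paper's own treatment. The one caveat you flag yourself is genuine: the concentration principle of Remark~1.10 is stated for $\mu_{\mathbb{B}}$ on $\mathbb{B}_j^\infty=\R^\infty$, whereas on $\mathbb{B}_J^\infty$ the paper observes that $\mu_{\mathbb{B}}(\mathbb{B}_J)=0$ and switches to the Yamasaki-type measure $V_j$ built from the shrinking intervals $j_k$; neither the paper nor your argument verifies that the support-localization property transfers to that measure, and this is the only substantive gap --- one the paper shares.
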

       \begin{Def}\label{Def17}
     If $ f \in \K^p[\mathcal{B}_{j}^{\infty}],$ we define the integral of $ f$ by $$ \int_{\B_{I}^{\infty}} f(x)d\mu_{\B}(x) = \lim\limits_{n \to \infty}\int_{\B_{j}^{n}} f_n(x)d\mu_{\B}(x),$$ where $ f_n \in \K^p[\B_{j}^{n}]$  is any  Cauchy sequence convergerging to $f(x).$
     \end{Def}
     \begin{thm}
   If $ f \in \K^p[\B_{j}^{\infty}]$, then the integral of $ f$ defined in Definition \ref{Def17} exists and is unique for every $f \in \K^p[\B_{j}^{\infty}]$.
\end{thm}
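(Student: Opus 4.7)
The plan is to separate the argument into existence of the limit and its independence from the chosen Cauchy sequence, in each case exploiting the identity $\K^p[\widehat{\B}_j^\infty] = \K^p[\mathbb{B}_J^\infty]$ from the preceding theorem together with the integration theory already built on each $\K^p[\mathbb{B}_j^n]$.

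For existence, fix a Cauchy sequence $\{f_n\} \subset \K^p[\widehat{\B}_j^\infty]$ with $f_n \in \K^p[\mathbb{B}_j^n]$ and $f_n \to f$ $\mu_{\mathbb{B}}$-a.e., and set $I_n := \int_{\mathbb{B}_j^n} f_n(x)\,d\mu_{\mathbb{B}}(x)$, which is well-defined by the integration theorem on $\K^p[\mathbb{B}_j^n]$. The task reduces to showing $\{I_n\}$ is Cauchy in the scalars. Since $f_n - f_m \in \K^p[\mathbb{B}_j^{\max(n,m)}]$ and $\{f_n\}$ is Cauchy in the $\K^p$-norm, I would invoke continuity of the integral functional on each $\K^p[\mathbb{B}_j^N]$ to obtain an estimate of the form
\[
|I_n - I_m| \le C\,\|f_n - f_m\|_{\K^p[\mathbb{B}_J^\infty]},
\]
which forces $\{I_n\}$ to be Cauchy; completeness of the scalars then yields the limit.

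For uniqueness, let $\{g_n\}$ be a second admissible Cauchy sequence with $g_n \to f$ a.e. I would interleave the two, setting $h_{2n-1} = f_n$ and $h_{2n} = g_n$, and verify that $\{h_n\}$ remains Cauchy in $\K^p$: since both sequences share the same $\K^p$-limit $f$ in the completion, the triangle inequality gives $\|h_i - h_j\|_{\K^p} \to 0$. The existence step then produces a single limit for $\int h_n\,d\mu_{\mathbb{B}}$, and because the two interleaved subsequences converge to $\lim_n \int f_n$ and $\lim_n \int g_n$ respectively, these values must coincide.

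The principal obstacle is the continuity estimate $|\int f\,d\mu_{\mathbb{B}}| \le C\,\|f\|_{\K^p[\mathbb{B}_j^n]}$ with a constant $C$ independent of $n$. The Kuelbs--Steadman norm controls integrals of $f$ against the dense family $\{\mcE_k\}$ in the unit ball of $\mcL^1$, but the constant function $1$ (which would recover $\int f$ directly) need not lie in that ball. My approach would be to approximate normalized indicators $\mu_{\mathbb{B}}(A)^{-1}\chi_A$ of bounded measurable sets $A$ by members of $\{\mcE_k\}$, and express $\int f$ as a monotone exhaustion limit; tracking the weights $t_k$ should yield a uniform constant, so that the Cauchy property transports through this approximation scheme and both existence and uniqueness follow.
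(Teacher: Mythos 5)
Your plan correctly isolates the crux --- a bound $\bigl|\int_{\mathbb{B}_j^n} f\,d\mu_{\mathbb{B}}\bigr|\le C\,\|f\|_{\K^p}$ with $C$ independent of $n$ --- but that bound is precisely the gap, and the workaround you sketch cannot close it. The $\K^p$ norm controls only the functionals $f\mapsto\int \mcE_k(x) f(x)\,d\mu_{\mathbb{B}}(x)$, each with constant $t_k^{-1/p}$; since $\sum_k t_k=1$ forces $t_k\to 0$, these constants are unbounded, so ``tracking the weights'' yields a divergent rather than a uniform constant. In fact the estimate you need is false on these spaces: choosing $f_N=\chi_{A_N}/\mu_{\mathbb{B}}(A_N)$ with $A_N$ a set of positive finite measure disjoint from the supports of $\mcE_1,\dots,\mcE_N$ gives $\int f_N\,d\mu_{\mathbb{B}}=1$ while $\|f_N\|_{\K^p}^p\le\sum_{k>N}t_k\to 0$, so integration over the whole space is an unbounded functional on $(\mcL^1,\|\cdot\|_{\K^p})$ and admits no continuous extension to the completion. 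This is not incidental: it is exactly why $\K^p$ can contain non-absolutely (HK-) integrable functions, and why the integral must be defined through a limiting procedure at all --- were $\int(\cdot)\,d\mu_{\mathbb{B}}$ continuous in the $\K^p$ norm, Definition \ref{Def17} would be superfluous. Your interleaving argument for uniqueness is sound in itself, but it feeds on the existence step and so inherits the gap.

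The paper takes a route that bypasses continuity altogether: for $f\ge 0$ it chooses the approximating Cauchy sequence to be increasing, so that the integrals form a monotone (hence convergent, possibly infinite) sequence of reals, and the general case is handled by the decomposition $f=f^{+}-f^{-}$; uniqueness is then read off from the Cauchy property, much as in your interleaving step. To repair your argument you would either have to restrict attention to the functionals genuinely controlled by the norm (the individual $\int\mcE_k(\cdot)\,d\mu_{\mathbb{B}}$), or show that for the specific sequences admitted in Definition \ref{Def17} (with $f_n$ essentially supported in $\B_j^n$) the numbers $\int_{\B_j^n}f_n\,d\mu_{\mathbb{B}}$ converge --- which is an extra structural property of the approximating sequence, not a consequence of its being $\K^p$-Cauchy.
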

\begin{proof} Since the family of functions $\{f_n\}$ is Cauchy, it is follows that if the integral exists, it is unique.  To prove existence, follow the standard argument and first assume that $f(x) \ge 0$.  In this case, the sequence can always be chosen to be increasing, so that the integral exists.  The general case now follows by the standard decomposition. 
\end{proof}
  \begin{thm}
  
  If $f \in \K^p[\mathbb{B}_J^\infty],$ then all theorems that are true for $f \in \K^p[\mathbb{B}_j^n],$ also hold for $ f \in \K^p[\mathbb{B}_J^\infty.]$
  \end{thm}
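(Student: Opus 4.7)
The plan is to exploit the defining structure of $\K^p[\mathbb{B}_J^\infty]$: by the previous theorem $\K^p[\widehat{\B}_j^\infty]=\K^p[\mathbb{B}_J^\infty]$, so every $f\in\K^p[\mathbb{B}_J^\infty]$ is the $\mu_{\mathbb{B}}$-a.e.\ limit of a Cauchy sequence $\{f_n\}$ with $f_n\in\K^p[\mathbb{B}_j^n]$, and by Definition~\ref{Def17} its integral is the limit of the integrals of the $f_n$. Thus any statement that can be written in terms of norms, integrals, inner products, duality pairings, or a.e.\ limits will be inherited from the finite-dimensional layers $\K^p[\mathbb{B}_j^n]$ by a density-plus-limit argument.

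First I would fix a property $\mathcal{P}$ known to hold on each $\K^p[\mathbb{B}_j^n]$ and express it in terms of the primitive operations of the space: the norm $\|\cdot\|_{\K^p[\mathbb{B}_j^n]}$, the inner product (when $p=2$), the integrals $\int_{\mathbb{B}_j^n}\mcE_k(x)f(x)\,d\mu_{\mathbb{B}}$, and pointwise limits. For each $f\in\K^p[\mathbb{B}_J^\infty]$ I select an approximating Cauchy sequence $\{f_n\}\subset\K^p[\widehat{\B}_j^\infty]$ converging to $f$ in norm and a.e. Because $\K^p[\mathbb{B}_j^n]\subset\K^p[\mathbb{B}_j^{n+1}]$ the approximations live in a nested family, so applying $\mathcal{P}$ at level $n$ and then letting $n\to\infty$ transfers the statement to $f$; norm-continuity on $\K^p[\mathbb{B}_J^\infty]$ and Definition~\ref{Def17} ensure the limit on the right-hand side exists.

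I would then verify the concrete cases of interest: the Hölder and generalized Hölder inequalities pass to the limit by sending the finite-dimensional inequality through $\|\cdot\|_{\K^p[\mathbb{B}_j^n]}\to\|\cdot\|_{\K^p[\mathbb{B}_J^\infty]}$; the Minkowski inequality follows from the triangle inequality on the completion; uniform convexity for $1<p<\infty$ is preserved because a union of uniformly convex spaces with a common modulus has a uniformly convex completion, as already used in the proof of $(2)$ of the preceding theorem; the duality $\{\K^p[\mathbb{B}_J^\infty]\}^*=\K^q[\mathbb{B}_J^\infty]$ follows from reflexivity and the nested-union identification of duals, exactly as in $(3)$ of that theorem; and the inclusion $\K^\infty[\mathbb{B}_J^\infty]\subset\K^p[\mathbb{B}_J^\infty]$ comes from boundedness of the sequence $|\int\mcE_k f\,d\mu_{\mathbb{B}}|$ combined with $\sum_k t_k=1$.

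The main obstacle is the purely qualitative character of the statement: ``all theorems that are true'' has to be interpreted as a metatheorem that any property formulated from norm, integral, inner-product and a.e.-limit operations is stable under the closure $\K^p[\widehat{\B}_j^\infty]\to\K^p[\mathbb{B}_J^\infty]$. The delicate point is to ensure that when a theorem on $\K^p[\mathbb{B}_j^n]$ involves a \emph{nonlinear} operation (e.g.\ the convexity modulus), the uniformity in $n$ is preserved; that uniformity is available here because the inner product and norm on every $\K^p[\mathbb{B}_j^n]$ are built from the \emph{same} weights $\{t_k\}$ and the \emph{same} dense set $\{\mcE_k\}$, so the estimates at level $n$ are independent of $n$ and survive the completion.
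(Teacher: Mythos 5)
The paper states this theorem without any proof at all (exactly as it does for the analogous claim about $\mcL^1[\mathbb{B}_j^\infty]$ versus $\mcL^1[\R_J^n]$), so there is no authorial argument to compare yours against. Your density-plus-limit transfer is the natural way to substantiate the claim, and it is consistent with how the paper actually handles the concrete instances at the finite levels: the proofs of H\"older, Minkowski, uniform convexity, duality and the inclusion $\K^\infty\subset\K^p$ for $\K^p[\mathbb{B}_j^n]$ all reduce to estimates built from the fixed weights $\{t_k\}$ and the fixed dense family $\{\mcE_k\}$, and the passage to $\K^p[\mathbb{B}_J^\infty]$ is everywhere in the paper effected by taking the closure of $\K^p[\widehat{\B}_j^\infty]=\bigcup_n\K^p[\mathbb{B}_j^n]$. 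You have correctly identified the two points on which any honest proof must turn: (i) the statement is a metatheorem, not a theorem, so it can only be proved for properties expressible in the primitive operations (norm, integral against $\mcE_k$, a.e.\ limit) that are continuous under the completion; and (ii) the estimates at level $n$ must be uniform in $n$, which here they are because the norm on every level is built from the same $\{t_k\}$ and $\{\mcE_k\}$. Your sketch is as rigorous as the statement permits and arguably more careful than the paper, which simply asserts the result.
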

    \begin{rem}
    We can extend the Fourier transformation as well as Convolution transformation from $\K^p[\mathbb{B}_J^\infty] $ to $\K^2[\mathbb{B}_J^\infty].$  Finally we  also can show  $\K^2[\mathbb{B}_J^\infty]$ is a better choice of Hilbert space for Heisenberg theory.
    \end{rem}

             \begin{thm}
      $\K^p[\mathbb{B}_j^n] $ and $\K^p[\mathbb{B}_J]$ are equivalent spaces.
     \end{thm}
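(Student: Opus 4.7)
The plan is to exploit the isometric isomorphism $T:(\mathbb{B},\|\cdot\|_{\mathbb{B}}) \to (\mathbb{B}_J,\|\cdot\|_{\mathbb{B}_J})$ from Proposition 1 and show that the Kuelbs--Steadman construction is functorial under this identification, so that pullback by $T$ produces an isometric isomorphism between the two spaces. I would view the theorem as comparing the Kuelbs--Steadman space $\K^p[\mathbb{B}_j^n]$ built from $\mathcal{L}^p[\mathbb{B}_j^n]$ with weights $\{t_k\}$ and dense set $\{\mathcal{E}_k\}$, against the analogous space $\K^p[\mathbb{B}_J]$ built from $\mathcal{L}^p[\mathbb{B}_J]$; the isomorphism $T$ should make these constructions coincide term by term.

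First I would transfer the building blocks. Take the countable dense sequence $\{\mathcal{E}_k\}$ on the unit ball of $\mathcal{L}^1[\mathbb{B}_j^n]$ used in the construction of $\K^p[\mathbb{B}_j^n]$, and define $\widetilde{\mathcal{E}}_k = \mathcal{E}_k \circ T$ on $\mathbb{B}_J$. Because $T$ is an isometric bijection, the family $\{\widetilde{\mathcal{E}}_k\}$ is countable, dense, and lies in the unit ball of $\mathcal{L}^1[\mathbb{B}_J]$, so it is admissible for the Kuelbs--Steadman construction on $\mathbb{B}_J$. Using the same weights $\{t_k\}$ with $\sum t_k = 1$, the resulting inner product (respectively $p$-norm) on $\mathcal{L}^p[\mathbb{B}_J]$ has exactly the form needed to define $\K^p[\mathbb{B}_J]$.

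Next I would verify the isometry on a dense subspace via change of variables. For $f \in \mathcal{L}^p[\mathbb{B}_j^n]$ set $\widetilde{f} = f \circ T \in \mathcal{L}^p[\mathbb{B}_J]$. Using the measure $V_J$ constructed from the normalized products $\overline{v}_k,\overline{\mu}_k$ earlier in the paper as the natural measure on $\mathbb{B}_J$, the change-of-variables identity
$$\int_{\mathbb{B}_j^n} \mathcal{E}_k(x) f(x)\, d\mu_{\mathbb{B}}(x) \;=\; \int_{\mathbb{B}_J} \widetilde{\mathcal{E}}_k(y)\,\widetilde{f}(y)\, dV_J(y)$$
should hold coordinate by coordinate, yielding $\|f\|_{\K^p[\mathbb{B}_j^n]} = \|\widetilde{f}\|_{\K^p[\mathbb{B}_J]}$. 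Since $\mathcal{L}^p$ is dense in each Kuelbs--Steadman completion, the map $f \mapsto f \circ T$ extends by continuity to an isometric isomorphism of the completions $\K^p[\mathbb{B}_j^n] \cong \K^p[\mathbb{B}_J]$.

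The principal obstacle is the measure-theoretic step, not the algebra of the isomorphism. The paper itself observes that $\mu_{\mathbb{B}}(\mathbb{B}_J) = 0$, so naively pushing $\mu_{\mathbb{B}}$ through $T^{-1}$ yields the trivial measure and kills every integral. Hence one must argue that $V_J$ (and not $\mu_{\mathbb{B}}$) is the correct Lebesgue-type measure on $\mathbb{B}_J$ relative to which the Kuelbs--Steadman construction is performed, and that under this choice the change-of-variables identity above actually reproduces the $\mu_{\mathbb{B}}$-integrals on $\mathbb{B}_j^n$. Verifying this compatibility on elementary cylinder sets and extending via the Lebesgue-extension construction of $V_J^n$ is the technical heart of the argument; once it is in place, the isometric isomorphism of Kuelbs--Steadman completions follows immediately from Proposition 1.
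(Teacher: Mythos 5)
Your proposal follows the same route as the paper: both rest on the isometric isomorphism $T:\mathbb{B}\to\mathbb{B}_J$ and transport the Kuelbs--Steadman construction through it. The paper's own proof is a single sentence asserting that the Lebesgue integral on the Banach space is \emph{defined} via $\mathbb{B}_J$ and $T^{-1}$, so the two spaces ``are not different''; your version supplies the details it omits --- pulling back the dense family $\{\mathcal{E}_k\}$, checking the isometry on the dense $\mathcal{L}^p$ subspace, and, importantly, flagging that the measure on $\mathbb{B}_J$ must be the normalized $V_J$ rather than the restriction of $\mu_{\mathbb{B}}$ (which the paper itself notes vanishes on $\mathbb{B}_J$), a compatibility the paper's argument silently treats as definitional.
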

     \begin{proof}
     If $\mathbb{B}_j^n$ is a separable Banach space, $ T $ maps $\mathbb{B}_j^n$ onto $\mathbb{B}_J \subset \mathbb{B}_j^\infty,$ where $ T $ is a isometric isomorphism so that $\mathbb{B}_J$ is a embedding of $\mathbb{B}_j^n $ into $ R_I^\infty.$ This is how we able to define a Lebesgue integral on $\mathbb{B}_j^n $ using $\mathbb{B}_J$ and $T^{-1}.$ Thus $\K^p[\mathbb{B}_j^n] $ and $\K^p[\mathbb{B}_J]$ are not different space.
     \end{proof}
     \begin{thm}
     $\K^p[\mathbb{B}_J^\infty] \subset \K^p[\mathbb{B}_j^\infty]$ embedding as closed subspace.
     \end{thm}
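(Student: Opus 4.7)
The plan is to realize $\K^p[\mathbb{B}_J^\infty]$ inside $\K^p[\mathbb{B}_j^\infty]$ via the natural inclusion of the underlying sets. Recall $\mathbb{B}_J^\infty = \bigcup_{n=1}^{\infty}\mathbb{B}_j^n$, and each $\mathbb{B}_j^n = \{\mathcal{Q}_n x : x \in \B\}\times j^n$ sits inside $\mathbb{R}^\infty = \mathbb{B}_j^\infty$ as a set. Given $f \in \K^p[\mathbb{B}_J^\infty]$, I would associate the extension $\tilde f : \mathbb{B}_j^\infty \to \mathbb{R}$ defined by $\tilde f(x) = f(x)$ for $x \in \mathbb{B}_J^\infty$ and $\tilde f(x)=0$ on $\mathbb{B}_j^\infty\setminus \mathbb{B}_J^\infty$. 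Since $\mu_{\B}$ restricted to $B[\mathbb{B}_j^n]$ is equivalent to $n$-dimensional Lebesgue measure, and since both $\K^p$-norms are built from integrals against the same type of countable dense system of test functionals $\{\mcE_k\}$ (chosen from the unit ball of $\mcL^1$), the ambient test sequence on $\mathbb{B}_j^\infty$ can be selected so that its restrictions generate the test sequence used on $\mathbb{B}_J^\infty$.

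The first step is to verify that $f \mapsto \tilde f$ is a linear isometry from $\K^p[\mathbb{B}_J^\infty]$ into $\K^p[\mathbb{B}_j^\infty]$. Starting with $f_n \in \K^p[\mathbb{B}_j^n] \subset \K^p[\widehat{\B}_j^\infty]$, the integrals $\int_{\mathbb{B}_j^n}\mcE_k(x)f_n(x)\,d\mu_{\B}(x)$ coincide with $\int_{\mathbb{B}_j^\infty}\mcE_k(x)\tilde f_n(x)\,d\mu_{\B}(x)$ by the zero extension, so the finite-$n$ norms
\[
\Bigl(\sum_k t_k \Bigl|\int \mcE_k(x)\tilde f_n(x)\,d\mu_{\B}(x)\Bigr|^p\Bigr)^{1/p}
\]
agree on both sides (with the convention $\sup_k$ for $p=\infty$). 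Passing to the Cauchy limit $f_n \to f$, which exists in both completions by definition, yields $\|\tilde f\|_{\K^p[\mathbb{B}_j^\infty]} = \|f\|_{\K^p[\mathbb{B}_J^\infty]}$.

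The second step is closedness. Because $\K^p[\mathbb{B}_J^\infty]$ is complete (as the Banach space completion described in the preceding definition) and the inclusion $f \mapsto \tilde f$ is an isometric embedding into the Banach space $\K^p[\mathbb{B}_j^\infty]$, the image is automatically complete, hence closed. Thus $\K^p[\mathbb{B}_J^\infty]$ sits inside $\K^p[\mathbb{B}_j^\infty]$ as a closed subspace.

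I expect the main technical obstacle to be the compatibility of the countable dense test systems $\{\mcE_k\}$ in the two ambient $\mcL^1$-spaces: strictly speaking, the dense sequence chosen on $\mcL^1[\mathbb{B}_j^\infty]$ is not the zero extension of a dense sequence on $\mcL^1[\mathbb{B}_J^\infty]$. To avoid re-proving everything with a different norm, I would choose the test system on $\mathbb{B}_j^\infty$ so that, after restriction to each $\mathbb{B}_j^n$, it produces a dense subset of the unit ball of $\mcL^1[\mathbb{B}_j^n]$; alternatively, one argues that any two such choices yield equivalent $\K^p$-norms (so the induced topology is intrinsic), and the isometric embedding becomes a topological embedding onto a closed subspace. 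The closedness conclusion then follows from completeness exactly as above.
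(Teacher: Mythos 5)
Your overall architecture (build an explicit linear isometry, then get closedness for free from completeness of the domain) is reasonable, and the second half of your argument is fine: an isometric image of a complete space inside a Banach space is automatically closed. In that respect your attempt is more concrete than the paper's own proof, which essentially just asserts that the set-level closed embedding $\mathbb{B}_J^\infty \subset \mathbb{B}_j^\infty$ transfers to the $\K^p$ level without ever exhibiting the map. You also correctly identified that the countable test systems $\{\mcE_k\}$ on the two spaces need not be compatible, which is a genuine (if secondary) issue.

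The fatal gap, however, occurs one step earlier, in the zero-extension isometry. The paper itself establishes that $\mu_{\mathbb{B}}(\mathbb{B}_j^n)=0$ for every $n$, and hence $\mu_{\mathbb{B}}(\mathbb{B}_J^\infty)=\mu_{\mathbb{B}}\bigl(\bigcup_{n}\mathbb{B}_j^n\bigr)=0$: the canonical image of a separable Banach space is a $\mu_{\mathbb{B}}$-null subset of $\mathbb{B}_j^\infty$, because $j^n=\prod_{k>n}j_k$ with $\mu(j_k)=1/\ln(k+1)\to 0$. This is exactly why the authors introduce the renormalized measures $\overline{v}_k(a)=\mu(a)/\mu(j_k)$, $\overline{\mu}_k(a)=\mu(a\cap j_k)/\mu(j_k)$ and the limit measure $V_j$ --- the restriction of $\mu_{\mathbb{B}}$ to $\mathbb{B}_J$ induces only the trivial measure. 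Consequently your extension $\tilde f$ vanishes $\mu_{\mathbb{B}}$-almost everywhere on $\mathbb{B}_j^\infty$, every integral $\int_{\mathbb{B}_j^\infty}\mcE_k(x)\tilde f(x)\,d\mu_{\mathbb{B}}(x)$ is zero, and $f\mapsto\tilde f$ is the zero map on equivalence classes rather than an isometry; your claimed equality of integrals equates an integral against the renormalized measure on $\mathbb{B}_j^n$ with an integral of a function supported on a $\mu_{\mathbb{B}}$-null set. A correct proof has to relate the two different measure structures explicitly (for instance by transporting the $\K^p[\mathbb{B}_J^\infty]$ norm through the isometry $T$ together with the renormalization that defines $V_j$), and neither your argument nor the paper's actually carries this out.
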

     
     \begin{proof}
      as every separable Banach space can be embedded in $\mathbb{B}_j^\infty$ as a closed subspace containing $\mathbb{B}_J^\infty.$       So, $\K^p[\mathbb{B}_J^\infty] \subset \K^p[\mathbb{B}_j^\infty] $ embedding  as a closed subspace. That is $\K^p[\bigcup\limits_{n=1}^{\infty}\B_{J}^{n}] \subset \K^p[R_I^\infty] $ embedding as a closed subspace.\\
      So, $\K^p[\mathbb{B}_j^n] \subset \K^p[\mathbb{B}_j^\infty] $ embedding as a closed subspace. Finally we can conclude that $\K^p[\mathbb{B}_J^\infty] \subset \K^p[\mathbb{B}_j^\infty]$ embedding as closed subspace.
     \end{proof}
     \subsection{Fourier Transform and $\K^p[\mathbb{B}_{J}^\infty]$}
                In Chapter $2$ of \cite{TG} they define the Fourier transform as a mapping from a uniform convex Banach space to its dual space. This approach exploits the strong relationship between a uniform convex Banach space (see \cite{JA}) and a Hilbert space at the expense of a restricted Fourier transform (we refer \cite{LG}). They mention it is also possible to define the Fourier transform $\mathsf{f}$ as a mapping on $ \mcL^1[\mathbb{B}_j^\infty] $ to $ C_0[\mathbb{B}_j^\infty] $ for all $ n $.\\
                 As one fixed linear operator that extends to a definition on $ \mcL^1[\mathbb{B}_j^\infty] $. as in definition recalling $ J= [\frac{-1}{2}, \frac{1}{2}],~\overline{x}=(x_k)_{k=1}^{n},~~\widehat{x}=(x_k)_{k=n+1}^{\infty} $ and $h_n(\widehat{x})=\otimes_{k=n+1}^{\infty}\chi_J(x_k) .$ The measurable functions on $\mathbb{B}_j^\infty, M_{J}^{n} $ are defined by $f_n(x)= f_{n}^{n}(\overline{x})\otimes h_n(\widehat{x}),$ where $ f_{n}^{n}(\overline{x}) $ is measurable on $ \mathbb{B}_j^\infty, $ so $ M_{J}^{n} $ is a partial tensor product subspace generated by the unit vector $ h(x)= h_{0}(\widehat{x})$. From this, we see that all of the spaces of functions considered in Chapter $2$ of \cite{TG} are also partial tensor product spaces generated by $h(x).$\\
                 In this section we show how the replacement of $ \mcL^1[\mathbb{B}_j^\infty], C_{0}[\mathbb{B}_j^\infty]$ by  $ \K^p[\mathbb{B}_j^\infty](h), C_{0}[\mathbb{B}_j^\infty](h)$ allows us to offer a different approach to the Fourier transform.
                 \begin{Def} \cite{TG} $\mathfrak{f}(f_n)(x) $ mapping $ \mcL^1[\mathbb{B}_j^\infty](h) $ into $ C_0[\mathbb{B}_j^\infty](\widehat{h}) $ by $\mathfrak{f}(f_n)(x)= \otimes_{k=1}^{n}\mathfrak{f}_k(f_n)\otimes_{k=n+1}^{\infty}\widehat{h}_n(x),$ where the product of sine function $ \widehat{h}_{n}(\widehat{x}) = [ \otimes_{k=n+1}^{n}\sin\frac{\pi y_k}{\pi y_k}] $ in the Fourier transform of the product $\Pi_{k=n+1}^{\infty}J $ of the interval $ J $.
                 \end{Def}
                 \begin{thm} \cite{TG} The operator $\mathfrak{f}$ extends to a bounded linear mapping of $ \mcL^1[\mathbb{B}_j^\infty](h) $ into $ C_{0}[\mathbb{B}_j^\infty](\widehat{h}).$
                 \end{thm}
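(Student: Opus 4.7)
The plan is to prove this by the standard BLT (bounded linear transformation) extension argument: verify that $\mathfrak{f}$ is bounded on the dense subspace of essentially tame functions $M_J^n \subset \mcL^1[\mathbb{B}_j^\infty](h)$, then extend by continuity.

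First, I would take a tame function $f_n(x) = f_n^n(\overline{x}) \otimes h_n(\widehat{x}) \in M_J^n$ and compute $\mathfrak{f}(f_n)$ explicitly using the tensor product definition. Here $\mathfrak{f}_k$ acting on the finite-dimensional factor is the classical Fourier transform on $\R^n$, and the tail factor $\widehat{h}_n(\widehat{y}) = \otimes_{k=n+1}^{\infty} \frac{\sin(\pi y_k)}{\pi y_k}$ arises because $h_n(\widehat{x})$ is the indicator of the box $\prod_{k=n+1}^{\infty} J$, whose Fourier transform factor-by-factor gives the product of sinc functions. The key inequality comes from the classical Riemann--Lebesgue lemma applied to the finite-dimensional part: $\|\mathfrak{f}_n(f_n^n)\|_{C_0(\R^n)} \le \|f_n^n\|_{L^1(\R^n)}$, combined with the pointwise bound $|\widehat{h}_n(\widehat{y})| \le 1$. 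Together with the earlier identification of $\mu_{\B}$ restricted to $B[\mathbb{B}_j^n]$ with $n$-dimensional Lebesgue measure, this yields
\[
\|\mathfrak{f}(f_n)\|_{C_0[\mathbb{B}_j^\infty](\widehat{h})} \;\le\; \|f_n\|_{\mcL^1[\mathbb{B}_j^\infty](h)}.
\]

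Next, since the classical Fourier transform on $\R^n$ is linear and since tensoring with the fixed vector $\widehat{h}_n$ is linear, $\mathfrak{f}$ is linear on each $M_J^n$ and hence on $\bigcup_n M_J^n$, which is dense in $\mcL^1[\mathbb{B}_j^\infty](h)$ by the constructive definition of $\mcL^1[\mathbb{B}_j^\infty]$. Given any $f \in \mcL^1[\mathbb{B}_j^\infty](h)$, take a Cauchy sequence of tame functions $\{f_n\}$ with $f_n \to f$ in $\mcL^1$. The uniform bound above shows $\{\mathfrak{f}(f_n)\}$ is Cauchy in $C_0[\mathbb{B}_j^\infty](\widehat{h})$; completeness of this target space gives a limit $F$, which I would define to be $\mathfrak{f}(f)$. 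A routine check confirms independence of the chosen Cauchy sequence, and the bound $\|\mathfrak{f}(f)\|_{C_0} \le \|f\|_{L^1}$ passes to the limit.

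The main obstacle I anticipate is not the inequality itself but the bookkeeping around the tail factor $\widehat{h}_n$: one must verify that the limit genuinely lies in the partial tensor product space $C_0[\mathbb{B}_j^\infty](\widehat{h})$ generated by the distinguished vector $\widehat{h}$, rather than merely in some abstract $C_0$ space, and that the tensor slot labels match up as $n$ varies. Concretely, when a Cauchy sequence $\{f_n\}$ has $f_n \in M_J^n$ with $n$ increasing, the factor $\widehat{h}_n$ is shrinking (more slots taken by the finite-dimensional part), so one needs the natural compatibility $f_n^n(\overline{x}) \otimes h_n(\widehat{x}) = f_n^n(\overline{x}) \otimes \chi_J(x_{n+1}) \otimes h_{n+1}(\widehat{x})$ to carry through after Fourier transforming, using that $\mathfrak{f}$ applied to $\chi_J$ gives exactly $\sin(\pi y_{n+1})/(\pi y_{n+1})$. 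Once that compatibility is recorded, the extension is the routine BLT step and the theorem follows.
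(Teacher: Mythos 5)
Your proposal is correct. Note that the paper itself offers no proof of this statement --- it is imported verbatim from \cite{TG} with a citation --- so there is no in-paper argument to compare against; but your route (the elementary bound $\|\mathfrak{f}(f)\|_{\infty}\le\|f\|_{1}$ plus the Riemann--Lebesgue lemma on the finite-dimensional factor, the pointwise bound $|\widehat{h}_n|\le 1$ on the sinc tail, and the BLT extension from the dense union of tame classes) is exactly the standard argument, and it is consistent with the key inequality $|\mathfrak{f}(f_n-f_m)|\le\|f_n-f_m\|_{1}$ that the paper does invoke when proving the subsequent $\K^p$ analogue. Your attention to the compatibility $h_n(\widehat{x})=\chi_J(x_{n+1})\otimes h_{n+1}(\widehat{x})$ and its image under $\mathfrak{f}$ addresses the only genuinely delicate point (that the limit lands in the partial tensor product space generated by $\widehat{h}$), which the paper glosses over entirely.
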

                 Based on these results, we get the following result.
                 \begin{thm} The operator $\mathfrak{f}$ extends to a bounded linear mapping of $ \K^p[\mathbb{B}_j^\infty](h) $ into $ C_{0}[\mathbb{B}_j^\infty](\widehat{h}).$
                 \end{thm}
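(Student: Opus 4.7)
The plan is to realize the extension through density together with the Bounded Linear Transformation (BLT) theorem, building directly on the preceding theorem. That preceding result already supplies a bounded linear map $\mathfrak{f}:\mcL^1[\mathbb{B}_j^\infty](h)\to C_0[\mathbb{B}_j^\infty](\widehat{h})$. Coupling this with the dense continuous embedding $\mcL^1[\mathbb{B}_j^\infty]\subset\K^p[\mathbb{B}_j^\infty]$ (inherited from the slice-level results $\mcL^1[\mathbb{B}_j^n]\subset\K^p[\mathbb{B}_j^n]$ and the identification $\K^p[\mathbb{B}_J^\infty]=\bigcup_{n}\K^p[\mathbb{B}_j^n]$) and the completeness of $C_0[\mathbb{B}_j^\infty](\widehat{h})$ in the sup norm, the extension will be automatic once I establish the crucial inequality
\[
\|\mathfrak{f}(f)\|_{C_0[\mathbb{B}_j^\infty](\widehat{h})}\;\le\; M\,\|f\|_{\K^p[\mathbb{B}_j^\infty]} \qquad \text{for every } f\in\mcL^1[\mathbb{B}_j^\infty](h).
\]

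To secure this inequality, I would first reduce it to finite-dimensional slices. By construction $\mathfrak{f}$ acts on tame functions $f_n(x)=f_n^n(\bar x)\otimes h_n(\hat x)$ as a classical $n$-dimensional Fourier transform multiplied by the fixed sine-product tail $\widehat{h}_n$, and every $f\in\K^p[\mathbb{B}_j^\infty](h)$ is a $\K^p$-Cauchy limit of such tame functions. It therefore suffices to prove the estimate on $\mcL^1[\mathbb{B}_j^n]$ with $M$ independent of $n$. The leverage comes from the very definition of the $\K^p$ norm through a countable dense duality family $\{\mcE_k\}\subset\mcL^\infty$: I would choose $\{\mcE_k\}$ so that (up to phase) it densely contains the characters $x\mapsto e^{-2\pi i\langle x,y\rangle}$ with $y$ ranging over a countable dense subset of $\mathbb{B}_j^n$. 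Under this choice, the pointwise values $\mathfrak{f}(f)(y)=\int e^{-2\pi i\langle x,y\rangle}f(x)\,d\mu_{\mathbb{B}}(x)$ coincide with, or are arbitrarily close to, the individual terms $\int\mcE_k f\,d\mu_{\mathbb{B}}$ appearing in the $\K^p$ norm, which yields $\|\mathfrak{f}(f)\|_\infty\le \sup_{k}|\int\mcE_k f\,d\mu_{\mathbb{B}}|\le t_k^{-1/p}\|f\|_{\K^p}$.

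The principal obstacle I anticipate is producing a single uniform constant $M$, both in the index $k$ along $\{\mcE_k\}$ and in the dimension $n$: the character-indices carry individual weights $t_k$, and recovering a supremum over $y$ from finitely many terms requires a careful coordination between density of the selected characters in $\mcL^\infty$ and a nondegenerate lower bound on the associated weights. Once this uniform bound is in hand, $\mathfrak{f}$ is bounded and linear on the dense subspace $\mcL^1[\mathbb{B}_j^\infty](h)\subset\K^p[\mathbb{B}_j^\infty](h)$, so the BLT theorem yields a unique bounded linear extension to all of $\K^p[\mathbb{B}_j^\infty](h)$ with values in the complete space $C_0[\mathbb{B}_j^\infty](\widehat{h})$, which is the required conclusion.
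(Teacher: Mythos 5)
Your overall skeleton (define $\mathfrak f$ on the dense subspace $\mcL^1[\mathbb{B}_j^\infty](h)$, prove a norm estimate, extend by BLT into the complete space $C_0[\mathbb{B}_j^\infty](\widehat h)$) is the same as the paper's, but the proof is not complete: the one step that actually carries the theorem --- the uniform inequality $\|\mathfrak f(f)\|_{C_0}\le M\|f\|_{\K^p}$ on the dense subspace --- is never established, and the specific route you sketch cannot produce it. From the definition of the norm, $\|f\|_{\K^p}^p=\sum_k t_k\bigl|\int\mcE_k f\,d\mu_{\mathbb{B}}\bigr|^p$ gives only $\bigl|\int\mcE_k f\,d\mu_{\mathbb{B}}\bigr|\le t_k^{-1/p}\|f\|_{\K^p}$ for each \emph{fixed} $k$, with a constant that depends on $k$. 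Since $\sum_k t_k=1$ forces $t_k\to 0$, you have $\sup_k t_k^{-1/p}=\infty$, so passing to the supremum over $k$ (which is exactly what recovering $\|\mathfrak f(f)\|_\infty$ from the character values requires) destroys the bound. Your displayed chain $\|\mathfrak f(f)\|_\infty\le\sup_k|\int\mcE_k f\,d\mu_{\mathbb{B}}|\le t_k^{-1/p}\|f\|_{\K^p}$ has a free index $k$ under a supremum and is not a valid inequality. You flag this as ``the principal obstacle,'' but flagging it is not the same as overcoming it; as written, the argument stops exactly where the theorem begins.

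For comparison, the paper's proof takes a Cauchy sequence $\{f_n\}$ in the $\K^p$-norm and bounds $|\mathfrak f(f_n)-\mathfrak f(f_m)|\le\|f_n-f_m\|_{L^1}$, then concludes the images are Cauchy in $C_0$. This is a different estimate from yours (it uses the classical $L^1\to C_0$ bound for the Fourier transform rather than trying to read the sup norm of $\widehat f$ off the $\K^p$ norm), but it silently substitutes the $L^1$ norm for the $\K^p$ norm: $\|f_n-f_m\|_{\K^p}\to 0$ does not imply $\|f_n-f_m\|_{L^1}\to 0$, since the embedding $\mcL^1\subset\K^p$ goes the wrong way for that. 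So neither your argument nor the paper's closes the gap by a norm comparison alone; any correct proof must exploit additional structure (for instance, a specific choice of the family $\{\mcE_k\}$ together with an argument that controls all character values simultaneously, not one $k$ at a time). You should either supply that uniform bound or restrict the claim to what the $k$-by-$k$ estimate actually yields.
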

                 \begin{proof}
                  Since $$\lim\limits_{n \to \infty}\K^p[\mathbb{B}_j^\infty](h) = \bigcup_{n=1}^{\infty}\K^p[\mathbb{B}_j^\infty](h) = \K^p[\widehat{\mathbb{B}}_{J}^{\infty}].$$ also $ \K^p[\mathbb{B}_j^\infty](h) $ is the closure of $\K^p[\widehat{\mathbb{B}}_{J}^{\infty}]$ in $\K^p$-norm  it follows that $\mathfrak{f}$ is bounded linear mapping of $\K^p[\widehat{\mathbb{B}}_{J}^{\infty}]$ onto $C_0[\mathbb{B}_j^\infty](\widehat{h}).$\\
                  Suppose the sequence $\{f_n\} \subset \K^p[\widehat{\mathbb{B}}_{J}^{\infty}] $ converges to $f \in \K^p[\mathbb{B}_j^\infty](h),$ since the sequence is Cauchy, so  $||f_n -f_m||_p \to 0 $ as $n,m \to \infty,$ it follows that 
                  \begin{align*}
                  |\mathfrak{f}(f_n(x))-f_m(x))| &\leq \int_{\mathbb{B}_j^\infty}|f_n(y)-f_m(y)|d\mu_{\mathbb{B}}(y) \\&=||f_n-f_m||_1.
                  \end{align*}
                  Thus $|\mathfrak{f}(f_n(x))-f_m(x))|$ is a Cauchy sequence in $C_0[\mathbb{B}_j^\infty](\widehat{h})$. Since  $\K^p[\widehat{\mathbb{B}}_{J}^{\infty}]$ is dense in $ \K^p[\mathbb{B}_j^\infty](h) $, so $\mathfrak{f}$ has a bounded extension mapping  from $ \K^p[\mathbb{B}_j^\infty](h) $ into $ C_{0}[\mathbb{B}_j^\infty](\widehat{h}).$
                 \end{proof}
       \subsection{Feynman Path Integral}
       The properties of $\K^2[\mathbb{B}_J^\infty]$ derived earlier suggests that it may be a better replacement of $\mcL^2[\mathbb{B}_J^\infty] $ in the study of the Path Integral formulation of quantum theory developed by Feynman. 
              We see that  position operator have closed densely defined extensions to $\K^2[\mathbb{B}_J^\infty].$ Further Fourier and convolution insure that all of the Schr\"{o}dinger and Heisenberg theories have a faithful representation on $\K^2[\mathbb{B}_J^\infty].$ Since $\K^2[\mathbb{B}_J^\infty] $ contain the space of measures, it follows that all the approximating sequences for Dirac measure convergent strongly in $\K^2[\mathbb{B}_J^\infty].$

  \bibliographystyle{amsalpha}

\end{document}